\newcommand{\abs}{\textnormal{abs}}
\newcommand{\nosemic}{\renewcommand{\@endalgocfline}{\relax}}
\newcommand{\dosemic}{\renewcommand{\@endalgocfline}{\algocf@endline}}
\newcommand{\pushline}{\Indp}
\newcommand{\popline}{\Indm\dosemic}
\newcommand{\R}{\mathbb{R}}
\newcommand{\C}{\mathbb{C}}
\newcommand{\Q}{\mathbb{Q}}
\newcommand{\N}{\mathbb{N}}
\newcommand{\Z}{\mathbb{Z}}
\renewcommand{\P}{\mathbb{P}}
\newcommand{\1}{{\bf 1}}
\newcommand\V{\mathcal{V}}
\newcommand\ps{\{\!\{t\}\!\}}
\DeclareMathOperator\val{val}
\DeclareMathOperator\coeff{coeff}
\DeclareMathOperator\In{in}
\DeclareMathOperator\Trop{Trop}
\DeclareMathOperator\TropC{Trop_\C}
\DeclareMathOperator\TropR{Trop_\R}
\renewcommand\k{\mathbbm{k}}
\theoremstyle{plain}
\newtheorem{Thm}{Theorem}
\newtheorem{Prop}[Thm]{Proposition}
\newtheorem{Lemma}[Thm]{Lemma}
\newtheorem*{Thm*}{Theorem}
\newtheorem*{Prop*}{Proposition}
\newtheorem*{Cor*}{Corollary}
\newtheorem*{Lemma*}{Lemma}
\newtheorem*{Conjecture*}{Conjecture}
\theoremstyle{definition}
\newtheorem{Def}[Thm]{Definition}
\newtheorem{Example}[Thm]{Example}
\newtheorem{Remark}[Thm]{Remark}
\newtheorem*{Constr*}{Construction}
\newtheorem*{Def*}{Definition}
\newtheorem*{Example*}{Example}
\newtheorem*{Remark*}{Remark}
\DeclareMathOperator{\sign}{sign}
\title{Computing complex and real tropical curves\\ using monodromy}
\author{Daniel A. Brake\thanks{Department of Applied
and Computational Mathematics and Statistics, 
University of Notre Dame, Notre Dame, IN 46556 ({\tt dbrake@nd.edu}, \url{danielthebrake.org}).}
\and 
Jonathan D. Hauenstein\thanks{Department of Applied
and Computational Mathematics and Statistics, 
University of Notre Dame, Notre Dame, IN 46556 ({\tt hauenstein@nd.edu}, \url{www.nd.edu/\~jhauenst}).}
\and
Cynthia Vinzant\thanks{Department of Mathematics, 
North Carolina State University, Raleigh, NC 27695 ({\tt clvinzan@ncsu.edu}, \url{www.math.ncsu.edu/\~clvinzan}).}}
\begin{document}

\maketitle

\begin{abstract}

\noindent Tropical varieties capture combinatorial information about how coordinates 
of points in a classical variety approach zero or infinity.  We 
present algorithms for computing the rays of a complex and real 
tropical curve defined by polynomials with constant coefficients.
These algorithms rely on homotopy continuation, monodromy loops, and Cauchy integrals.
Several examples are presented which are computed
using an implementation that builds on the 
numerical algebraic geometry software {\tt Bertini}.
\end{abstract}

\section*{Introduction}

Tropical geometry is a field of mathematics that uses combinatorial structures to study problems in algebraic geometry. 
It has proven to be a powerful tool for understanding real and complex varieties. 
Computations with tropical varieties can extract delicate combinatorial properties of algebraic varieties, 
and tropical methods can be used to construct finely-tuned examples of varieties with desired properties, e.g., \cite{TropCentralPath, Viro08}.

Most of the current algorithms for computing tropical varieties 
are symbolic and are restricted to tropical varieties 
defined over algebraically-closed fields. 
The past few years have seen the development of new techniques 
based on numerical algebraic geometry
for computing complex tropical varieties in the case of 
hypersurfaces \cite{HauensteinSottile12} and curves \cite{jensen2014computing}.  
In this paper, we present a new algorithm for computing complex tropical curves, 
which we then enhance to compute real tropical curves.
We have implemented these algorithms 
using a combination of Matlab and {\tt Bertini} \cite{BHSW06}
available at \cite{BertiniTropical}.

We anticipate that the numerical 
computation of tropical curves represents a first step towards the 
numerical computations 
of higher-dimensional tropical varieties. 
Another reason to focus on curves, as mentioned in \cite{jensen2014computing}, 
is that the computation of tropical curves is an internal step in the 
computation of other tropical varieties. 
Moreover, to the best of our knowledge, 
no other computational techniques exist for real tropical curves.
We hope to extend this to higher-dimensional real tropical varieties as well. 

The remainder of the paper is organized as follows. 
In the first section, we summarize the necessary background and 
notation for tropical varieties.
Cauchy's integral formula and projectivization are 
discussed in Section~\ref{sec:Cauchy}. 
In Section~\ref{sec:Algorithms}, we present our algorithms for computing 
complex and real tropical curves and prove their correctness. 
Implementation of these algorithms is discussed in Section~\ref{sec:implementation}.  Finally, we conclude in Section~\ref{sec:examples}
with examples.

\section{Background on tropical geometry}\label{sec:trop}

Let $\k$ be a field and $I$ be an ideal in the polynomial ring $\k[x_1, \hdots, x_n]$. For any subset $S$ of the field $\k$, we denote $\V_S(I)$ as zero set in $S^n$ of the 
polynomials in $I$. 
The most often cases are $S=\k$ and the set of non-zero elements $S=\k^*$.  

The set of Puiseux series over a field $\k$, denoted $\k\ps$, is the union over all positive integers $n$ of 
the formal Laurent series in $t^{1/n}$, namely
\begin{equation}
\k\ps \;\; =\;\; \bigcup_{n\geq 1} \k(\!(t^{1/n})\!) \;\;= \;\; 
\bigcup_{n\geq1} \left.\left\{\sum_{j=k}^{\infty}c_{j}t^{j/n}
~\right|
k\in\mathbb{Z}, \;\;c_{j}\in \k \right\}.
\label{eqn:puiseux}
\end{equation}
The field $\C\ps$ is algebraically closed and $\R\ps$ is real closed. The field of Puiseux series has a valuation,
$\val :\k\ps^* \rightarrow \mathbb{Q}$, and coefficient map, $\coeff:\k\ps^* \rightarrow \k^*$, given by 
\[
\val \left(\sum_qc_qt^q\right) \;= \; \min\{q \; | \;c_{q}\neq 0\}
\ \quad \text{ and } \quad \ 
\coeff(y) \;= \; 
c_{\val(y)}.
\] 
If a Puiseux series $y$ converges for $t$ in a neighborhood of zero then $\coeff(y)t^{\val(y)}$ 
gives its order of growth around $t=0$.  Both maps $\val(\cdot)$ and $\coeff(\cdot)$ extend to $\k\ps^n$ coordinate-wise. 

Given an ideal $I \subset \k[x_1, \hdots, x_n]$, consider 
the zeros of the polynomials in $I$ over $\k\ps^n$ and their valuations  in $\Q^n$. For technical reasons, we 
actually consider the negative of the valuation map, corresponding to the ``max'' convention.  Letting this run over all solutions and taking the Euclidean 
closure in $\R^n$ yields the \emph{tropical variety} over $\k$ of the ideal $I$. 

\begin{Def} 
Given an ideal $I\subset \k[x_1, \hdots, x_n]$, the {\bf tropical variety over $\k$},  denoted $\Trop_\k(I) \subset \R^n$, is the closure of the image of its variety over $\k\ps^*$ under the negative valuation map, 
\[
\Trop_\k(I)\;\;=\;\; -\overline{\val(\V_{\k\ps^*}(I))} .
\]
We call $\TropC(I)$ the {\bf tropical variety} of $I$ and $\TropR(I)$ the {\bf real tropical variety} of $I$.  
As $\Trop_\k(I)$ only depends on the variety $\V_{\k^*}(I)$, we will also use the notation $\Trop_\k(\V(I))$. 
\end{Def}

In this paper, we will be particularly interested in locally convergent Puiseux series.~For 
$\k = \R$, $\C$, replacing $\k\ps$ with locally convergent Puiseux 
series $\k\ps_{conv}$ does not~change the image of a variety under the valuation map.  That is, 
\mbox{$\Trop_\k(I) =-\overline{\val(\V_{\k\ps_{conv}}(I))}$}.

The tropical variety $\TropC(I)$ is closely related to the initial ideals of the ideal $I$ as follows.  
For $\alpha \in \N^n$, let $x^{\alpha}$ denote $x_1^{\alpha_1}x_2^{\alpha_2}\cdots x_n^{\alpha_n}$. 
Given $w \in \R^n$ and a polynomial \mbox{$f(x) = \sum_{\alpha} f_{\alpha}x^{\alpha}$}, the initial form of $f$ with respect to 
$w$, denoted $\In_w(f)$, is the sum of the terms~\mbox{$f_{\alpha}x^{\alpha}$} which 
maximize~\mbox{$w\cdot \alpha$}. 
For an ideal $I\subset \C[x_1, \hdots, x_n]$, the initial ideal of $I$ with respect to~$w$, 
denoted~$\In_w (I)$, is
the ideal generated by the initial forms of elements of $I$, namely
$$\In_w(I)  = \langle  \In_w(f)  \ | \  f\in  I \rangle.$$

Suppose $y  \in \k\ps^n$ is an $n$-tuple of non-zero Puiseux series with 
valuation \mbox{$\val(y)\in \Q^n$} and leading 
coefficients $\coeff(y) \in (\k^*)^n$.  
If $y$ lies in the variety $\V_{\k\ps}(I)$ for some ideal 
\mbox{$I \subset \k[x_1, \hdots, x_n]$}, 
then $w = -\val(y)$  belongs to the tropical variety $\Trop_\k(I)$, 
and $\coeff(y)$ belongs to the variety of the initial ideal $\V_{\k}(\In_w(I))$. 
In particular, $\V_{\k}(\In_w(I))\cap(\k^*)^n$ is non-empty which implies that $\In_w(I)$ cannot contain a monomial. 
Over $\C$, the converse holds and is known as the fundamental theorem of tropical geometry.  
Indeed, for $\k=\C$, tropical varieties have very specific combinatorial structure, which we summarize from 
 \cite[Theorems~3.2.3~and~3.3.8]{MSbook}:

\begin{Thm}
Let $I\subset \C[x_1, \hdots, x_n]$ be a prime ideal that defines a 
$d$-dimensional variety~$\V_{\C^*}(I)$. 
Then, the tropical variety $\Trop_{\C}(I)$ is the support of a pure $d$-dimensional rational polyhedral fan.
It is the set of weights for which the initial ideal of $I$ contains no monomials: 
$\TropC(I) =   \{w\in \R^n~|~\In_w(I) \hbox{~does not contain a monomial}\}$.
 \end{Thm}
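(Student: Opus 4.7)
The plan is to prove the theorem in two stages: first establish the set-theoretic equality $\TropC(I) = \{w : \In_w(I) \text{ contains no monomial}\}$, which is the fundamental theorem of tropical geometry, and then upgrade this to the structural claim that the set is the support of a pure $d$-dimensional rational polyhedral fan.

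For the set-theoretic equality, the inclusion $\TropC(I) \subseteq \{w : \In_w(I) \text{ is monomial-free}\}$ is handled by the remark already in the paragraph preceding the theorem: any Puiseux-series solution $y \in \V_{\C\ps^*}(I)$ with $-\val(y)=w$ produces a point $\coeff(y) \in \V_{\C}(\In_w(I)) \cap (\C^*)^n$, which obstructs any monomial from lying in $\In_w(I)$. The reverse inclusion is the real content. For rational $w$ I would first apply a torus-equivariant Nullstellensatz to produce a point $a \in \V_{\C}(\In_w(I)) \cap (\C^*)^n$, and then lift $a$ term by term to a Puiseux-series solution $y \in \V_{\C\ps}(I)$ with $\coeff(y)=a$ and $\val(y) = -w$, using that $\C\ps$ is algebraically closed together with a Newton--Puiseux style iterative construction. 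For arbitrary $w \in \R^n$, I would approximate by rationals and use closedness of $\TropC(I)$ under the Euclidean topology.

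For the polyhedral fan structure, the natural tool is the Gr\"obner fan of $I$. The ideal $\In_w(I)$ depends only on the relative interior of the Gr\"obner cone containing $w$, and these cones assemble into a rational polyhedral fan $\Sigma$ on $\R^n$; in particular each cone is cut out by finitely many rational linear inequalities of the form $w\cdot(\alpha-\beta) \ge 0$. Since the ``$\In_w(I)$ contains no monomial'' condition is constant on each open cone of $\Sigma$, the set $\TropC(I)$ is a union of closed cones of $\Sigma$. Homogeneity of $\In_{\lambda w}(I) = \In_w(I)$ for $\lambda>0$ promotes this from a polyhedral complex to a fan, and rationality is inherited directly from $\Sigma$.

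The step I expect to be the main obstacle is \emph{purity}: showing that every maximal cone of $\TropC(I)$ has dimension exactly $d = \dim \V_{\C^*}(I)$. This is essentially the Bieri--Groves theorem, and the cleanest path I would pursue is via transcendence-degree computations over the residue field. Concretely, for $w$ in the relative interior of a maximal cone $\sigma$, the initial ideal $\In_w(I)$ is (up to monomial rescaling) homogeneous with respect to the lineality directions of $\sigma$, and one shows its variety in $(\C^*)^n$ is a union of translates of a subtorus of dimension equal to $\dim(\sigma)$; combined with a deformation comparing Krull dimensions of $I$ and $\In_w(I)$, this forces $\dim(\sigma) = d$. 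Ruling out lower-dimensional ``stray'' maximal cones uses irreducibility of $I$ (so that $\V_{\C^*}(I)$ cannot split off lower-dimensional tropical pieces) together with the lifting argument from step two, which guarantees that every point of a cone really comes from a $d$-dimensional family of Puiseux solutions. The fan structure and fundamental theorem are comparatively formal once the lifting and this purity statement are in hand.
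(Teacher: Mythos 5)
The paper does not prove this theorem: it is stated as a summary of \cite[Theorems~3.2.3~and~3.3.8]{MSbook}, so there is no in-paper argument to compare your sketch against. What you outline is the standard route of that reference: set-theoretic equality by lifting a point of $\V_{\C}(\In_w(I))\cap(\C^*)^n$ to a Puiseux-series solution, polyhedrality via the Gr\"obner fan, and purity as a Bieri--Groves statement. As a sketch it is sound, but a few steps are more delicate than you indicate. The Gr\"obner fan is defined for homogeneous ideals, so you must homogenize $I$ (or pass to the Gr\"obner complex of the homogenization) before ``the cones assemble into a fan'' is literally true; and its cones are then cut out by inequalities involving the extra homogenizing coordinate, not simply by $w\cdot(\alpha-\beta)\ge 0$. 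The passage from rational to arbitrary real $w$ cannot be handled by ``approximate and use closedness of $\TropC(I)$'' alone: that gives one containment, but to show an irrational $w$ with monomial-free $\In_w(I)$ lies in $\TropC(I)$ you need to produce nearby \emph{rational} $w'$ with the \emph{same} initial ideal, which is exactly the Gr\"obner-cone constancy you invoke afterwards, not an approximation argument. Finally, the lineality-space/torus-invariance observation yields only $\dim\sigma\le d$ for each cone $\sigma$; ruling out maximal cones of dimension $<d$ does not follow from irreducibility of $I$ together with ``the lifting argument,'' but requires the projection-to-a-coordinate-subtorus argument (tropical varieties of $d$-dimensional irreducible varieties surject onto $\R^d$ under suitable monomial projections) carried out in \cite[\S3.3]{MSbook}.
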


\begin{Remark} One can associate a multiplicity to each maximal cone in the tropical variety $\TropC(I)$. 
Each irreducible component of the variety $\V_{\C^*}(\In_w(I))$ contributes to the multiplicity of the ray $w$ 
by the multiplicity of the corresponding minimal prime in $\In_w(I)$. See \cite[\S 3.4]{MSbook} for details. 
With multiplicities, the tropical complex variety is \emph{balanced}.  In particular, if the tropical variety 
is a union of rays $\vec{r}_1, \hdots, \vec{r}_s$ with multiplicities $m_1, \hdots, m_s$, and $r_i$ is the primitive integer point on 
the ray $\vec{r}_i$, the weighted sum $m_1r_1 + \hdots + m_s r_s \in \R^n$ is zero.
\end{Remark}

As in classical algebraic geometry, real varieties provide subtle challenges for tropicalization.  
There are some similarities to the complex case. In particular, Alessandrini \cite{Alessandrini13} recently showed that $\TropR(I)$ 
is a rational polyhedral fan related to the real variety $\V_{\R^*}(I)$. 

\begin{Thm}[\cite{Alessandrini13}] For an ideal $I\in \R[x_1, \hdots, x_n]$, 
the real tropical variety $\TropR(I)$ is a rational polyhedral fan
in $\R^n$ whose dimension is at most the dimension of the variety $\V_{\R^*}(I)$.\end{Thm}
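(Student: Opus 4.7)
The plan is to show $\TropR(I)$ is a subfan of the complex tropical fan $\TropC(I)$ and then bound its dimension. First I would establish the containment $\TropR(I) \subseteq \TropC(I)$: every locally convergent real Puiseux series solution of $I$ is also a complex Puiseux series solution of $I \cdot \C[x_1, \ldots, x_n]$, so the corresponding negated valuations lie in $\TropC(I)$. By the preceding theorem (applied componentwise to the primary decomposition over $\C$), $\TropC(I)$ is the support of a rational polyhedral fan $\Sigma$, which I would refine so that the complex initial ideal $\In_w(I \cdot \C[x])$ is constant on the relative interior of each cone of $\Sigma$; this in turn makes $\In_w(I)$ constant on each relative interior.

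The next step is to argue that $\TropR(I)$ is a union of cones of $\Sigma$. The crucial intermediate statement is a real analog of the fundamental theorem: $w \in \TropR(I)$ if and only if $\V_\R(\In_w(I)) \cap (\R^*)^n$ is nonempty. The forward direction is immediate, since if $y \in \V_{\R\ps^*}(I)$ with $-\val(y) = w$, then $\coeff(y) \in (\R^*)^n$ lies in $\V_\R(\In_w(I))$. Granting this equivalence, membership in $\TropR(I)$ depends only on $\In_w(I)$, hence only on the relative interior of the cone of $\Sigma$ containing $w$. Since $\TropR(I)$ is closed by definition, it must then be a union of closed cones of $\Sigma$, and therefore itself a rational polyhedral fan.

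The main obstacle is the reverse direction of this real fundamental theorem: given a real solution $c \in \V_\R(\In_w(I)) \cap (\R^*)^n$, one must construct a locally convergent real Puiseux series solution $y(t)$ with $\coeff(y) = c$ and $-\val(y) = w$. Over $\C$, the Newton-Puiseux algorithm combined with Hensel's lemma performs this lift, but the reality of $y$ is not automatic: each step of the iterative construction must preserve the sign structure of the coefficients. I would attempt this by passing to a smooth real point of $\V_\R(\In_w(I))$ (possibly after a further refinement of $\Sigma$), applying the real implicit function theorem to deform it along a real-analytic arc parametrized by $t^{1/N}$ for a suitable $N$, and invoking real analyticity to guarantee local convergence. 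This is the technical heart of Alessandrini's result and rests on the real closedness of $\R\ps$ together with Artin-style approximation over a real closed field.

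For the dimension bound, I would analyze the map $-\val \colon \V_{\R\ps^*_{conv}}(I) \to \TropR(I)$. Each locally convergent solution $y(t)$ traces a real-analytic arc in $\V_{\R^*}(I)$ for small $t>0$, and $-\val(y)$ records the orders of growth of the coordinates along that arc. Using semi-algebraic stratification (or o-minimality) to cover $\V_{\R^*}(I)$ by finitely many real-analytic manifolds of dimension at most $d := \dim \V_{\R^*}(I)$, one shows that the vectors of growth rates realized by arcs within a fixed stratum span at most a $d$-dimensional set. Taking the closure and the union over strata yields $\dim \TropR(I) \leq d$, completing the plan.
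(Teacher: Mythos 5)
Your plan breaks at the ``real fundamental theorem'' step, and the break is fatal: the converse you propose is false, and the paper itself points this out. You claim that $w \in \TropR(I)$ if and only if $\V_\R(\In_w(I)) \cap (\R^*)^n$ is nonempty, and from this you would conclude that membership in $\TropR(I)$ depends only on $\In_w(I)$, hence only on the Gr\"obner cone containing $w$. But the discussion immediately following the statement in the paper notes that $\TropR(I)$ is \emph{not} necessarily a subfan of the Gr\"obner fan of $I$, citing Alessandrini's Figure 7 and Section 2 of Vinzant's ``Real radical initial ideals'' as sources of counterexamples. Indeed, the forward implication (real Puiseux series solution $\Rightarrow$ real point of the initial ideal) is elementary and correctly stated, but a real point of $\V_\R(\In_w(I)) \cap (\R^*)^n$ need not lift to a real Puiseux series solution with valuation $-w$; the sign obstructions you gesture at in the Newton--Puiseux lifting are genuine and cannot be made to disappear by passing to a smooth point. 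Consequently, the set of $w$ in $\TropR(I)$ can change as $w$ moves within the relative interior of a single Gr\"obner cone, and your argument that $\TropR(I)$ is a union of cones of the refinement $\Sigma$ does not go through.

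This means the polyhedrality of $\TropR(I)$ must be proved by some other mechanism, and in fact Alessandrini's proof does not go through initial ideals at all. His argument works with the logarithmic limit set of a real semi-algebraic set directly, using o-minimal/semi-algebraic tools (cell decomposition, Hardt-type trivialization) to establish that the limit set is a finite union of rational polyhedral cones, with the fan structure coming from the stratification rather than from the Gr\"obner fan. Your dimension-bound paragraph (semi-algebraic stratification into real-analytic manifolds of dimension at most $\dim \V_{\R^*}(I)$, then bounding the span of realized growth-rate vectors) is much closer in spirit to what Alessandrini actually does, and that part of your plan is reasonable as a sketch. But to repair the proposal you would need to abandon the initial-ideal route for polyhedrality and instead extract rationality and polyhedral structure from the stratification argument itself, since the cone structure you get has no reason to be compatible with the Gr\"obner fan.
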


In general, we only know that $\TropR(I) \subseteq \TropC(I)$. 
In fact, $\TropR(I)$ is not necessarily a subfan of the Gr\"obner fan of $I$.  
For examples, see \cite[Fig. 7]{Alessandrini13} or \cite[Section 2]{Vinzant12}.
The real tropical variety $\Trop_{\R}(I)$ need not have the same dimension as $\V_{\R}(I)$, 
it need not satisfy any balancing conditions, and it may not be pure of any dimension.

\begin{Example} \label{ex:quintic}
Consider the sextic polynomial  $f  = x^6-x^3+y^2$.
The complex tropical variety $\TropC(f)$ consists of the rays spanned by $(-2,-3)$, $(1,3)$,  and $(0,-1)$, with multiplicities 1, 2, and 3, respectively, as 
shown in Figure~\ref{fig:quinticTrop}. 
In this case, we can use power series expansions to explicitly find solutions to $f=0$ over $\C\ps$ with these valuations, e.g.:
\begin{center}
\begin{tabular}{lcl}
$(x,y) = \left(t^2,\; t^3-\frac{t^9}{2}-\frac{t^{15}}{8}-\frac{t^{21}}{16} + \hdots  \right)$  & $ \ \ \rightarrow \ \ $ & $-\val(x,y) = (-2,-3) $ \\
$(x,y) =\left (\frac{1}{t}, \ -\frac{i}{t^3}+\frac{i}{2}+\frac{i t^3}{8}+\frac{i t^6}{16}+\hdots \right)$ &$ \ \ \rightarrow \ \ $  &$-\val(x,y) = (1,3) $ \\
$(x,y) = \left(1 -\frac{t^2}{3} -\frac{4 t^4}{9} -\frac{77 t^6}{81}+ \hdots , \  t \right)$ &$ \ \ \rightarrow \ \ $  &$-\val(x,y) = (0,-1).$ \\
\end{tabular}
\end{center}
Here, the Puiseux series $x,y\in \C\ps$ are locally convergent 
near $t = 0$ meaning that these tuples locally parametrize a path in $\V_{\C^*}(I)$. 
The first and last actually belong to $\R\ps$, 
and therefore contribute to rays in $\TropR(f)$. On the other hand, 
since the real variety $\V_{\R}(f)$ is compact, there is no point
in $\V_{\R\ps}(f)$ with valuation $(-1,-3)$. 
\end{Example}

\begin{center}
\begin{figure}[h] 
\begin{center}
\includegraphics[width=2in]{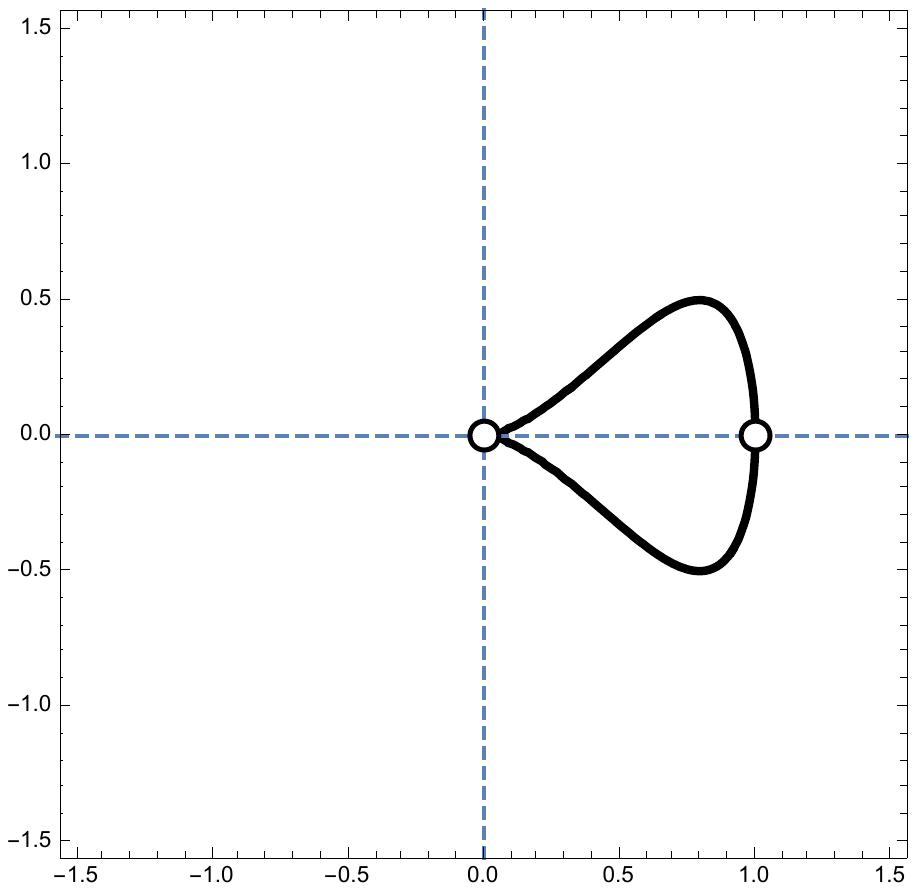} \quad \quad
\includegraphics[width=1.5in]{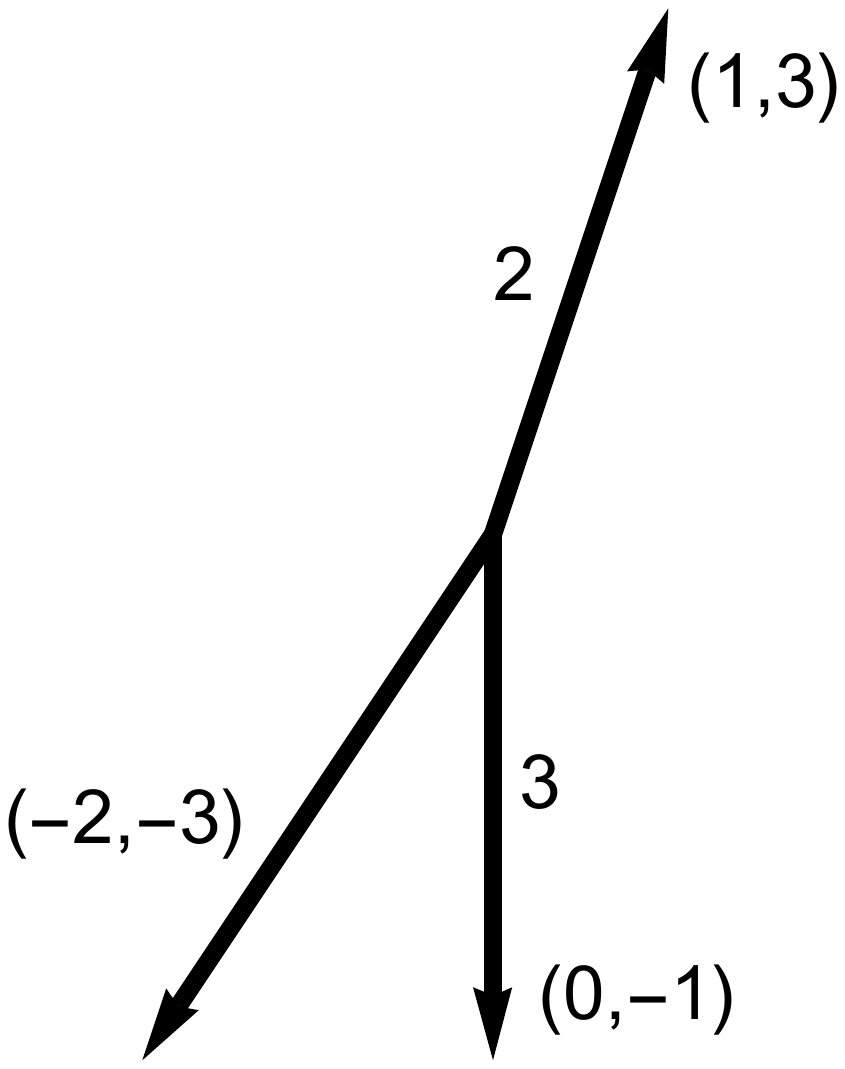} \quad \quad
\includegraphics[width=1.4in]{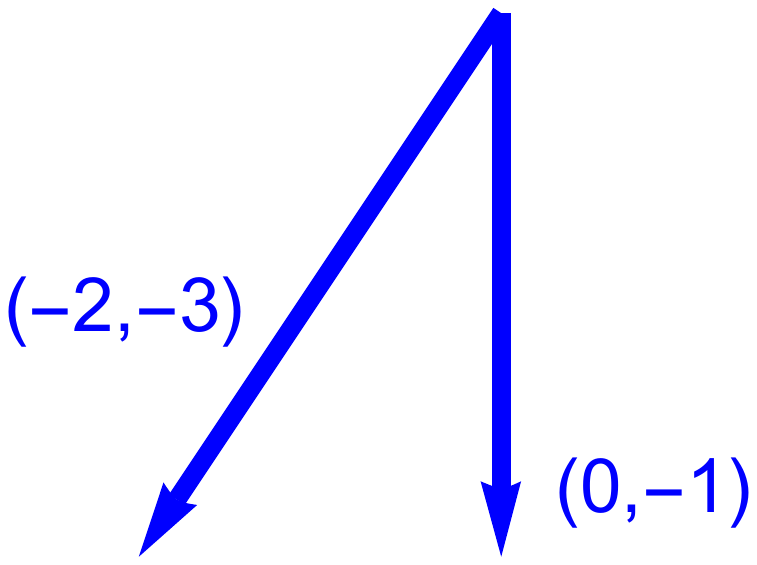}
\end{center}
\caption{The real points of a sextic plane curve,  its complex tropical variety with multiplicities listed when $> 1$, and its real tropical variety.}
\label{fig:quinticTrop}
\end{figure}
\end{center}

\begin{Remark}
In this paper, we avoid assigning multiplicities to rays in the real tropical variety, as it is not clear to us what the correct notion of multiplicity should be in this case.  
As the example below shows, it is not enough to count the multiplicity of 
real minimal primes associated to the initial ideal of a ray.  
However, in the process of computing a real tropical curve, 
one can compute the number of real Puiseux series with a given valuation $w$ 
that define different germs of a function around a point $p\in \V_{\R^*}(\In_w(I))$,
which may provide some notion of multiplicity in this case.
\end{Remark}

\begin{Example}
Consider the polynomial $f = ((x-1)^2+y^2) \cdot  ((x-1)^2-y^2) $. Its variety is the union of four lines, one real pair and one complex conjugate pair. 
The ray spanned by the vector $w=(0,-1)$ lies in both the real and complex tropical variety.  The corresponding initial form is $\In_w(f) = (x-1)^4$ 
so this ray has multiplicity four in $\TropC(f)$. Even though all four roots of $\In_w(f)$ are real, the contribution to the multiplicity from the real points is only two,
with the other contribution of two to the multiplicity arising from nonreal points.  
An irreducible polynomial with the same local behavior is $f+y^6$.
\end{Example}

\begin{Remark}
The real part of the complex torus $(\C^*)^n$ naturally breaks up into orthants with a given sign pattern.
Paths in $(\R^*)^n$ approaching $0$ or $\infty$ do so from within some orthant. 
In order to keep track of this data, consider the \emph{signed valuation map}
\[
{\rm sval}:\R\ps^* \rightarrow \Q \times \{+, -\} \ \ \ \text{ given by } \ \ \
{\rm sval}(x)  = 
\begin{cases}
(\val(x), +) & \text{if }\coeff(x)>0 \\
(\val(x), -) &  \text{if }\coeff(x)<0,    
\end{cases} \]
as in \cite{TropSimplex, Mikhalkin, Tabera, Viro08}.
The map ${\rm sval}$ extends coordinate-wise to $(\R\ps^*)^n $. For an ideal $I\subset\R[x_1, \hdots, x_n]$, 
the {\bf signed real tropical variety} ${\rm sTrop}_{\R}(I)$ is the image of the 
variety~$\V_{\R\ps^*}(I)$ under 
the map $-{\rm sval}$, where the negation acts on the vector in $\Q^n$ and fixes the sign pattern in $\{\pm\}^n$. 
The signed tropical variety lies in the disjoint union of $2^n$ copies of $\Q^n$.  
The {\bf positive tropical variety} ${\rm Trop}_{+}(I)$ is the part of the signed tropical variety 
lying in the positive copy, $\Q^n \times (+, \hdots, +)$, and has appeared 
in \cite{PositiveBergman, SpeyerWilliams05}, among others.
\end{Remark}

\begin{Example}\label{ex:butterfly}
Consider the quartic polynomial  $f  =  x^4 + y^4 - (x - y)^2 (x + y)$.
The real tropical variety $\TropR(f)$ consists of the rays spanned by the vectors $(0,-1)$, $(-1,0)$, and $(-1,-1)$. 
For example, the Puiseux series $(x,y) = (1 - t  -2 t^2 +\hdots , t)$ lies the variety of $f$. It image under $-{\rm sval}$
is $((0,-1),(+,+))$.  Replacing $t$ with $-t$ gives a point in $\R\ps^2$  in the variety of $f$ with $-{\rm sval}(x,y)=((0,-1),(+,-))$. 
The real variety of $f$ along with its signed tropical variety is shown in Figure~\ref{fig:butterfly}. 
We will return to this in Example~\ref{ex:butterfly2}. 
\end{Example}

\begin{center}
\begin{figure}[h] 
\begin{center}
\includegraphics[width=1.8in]{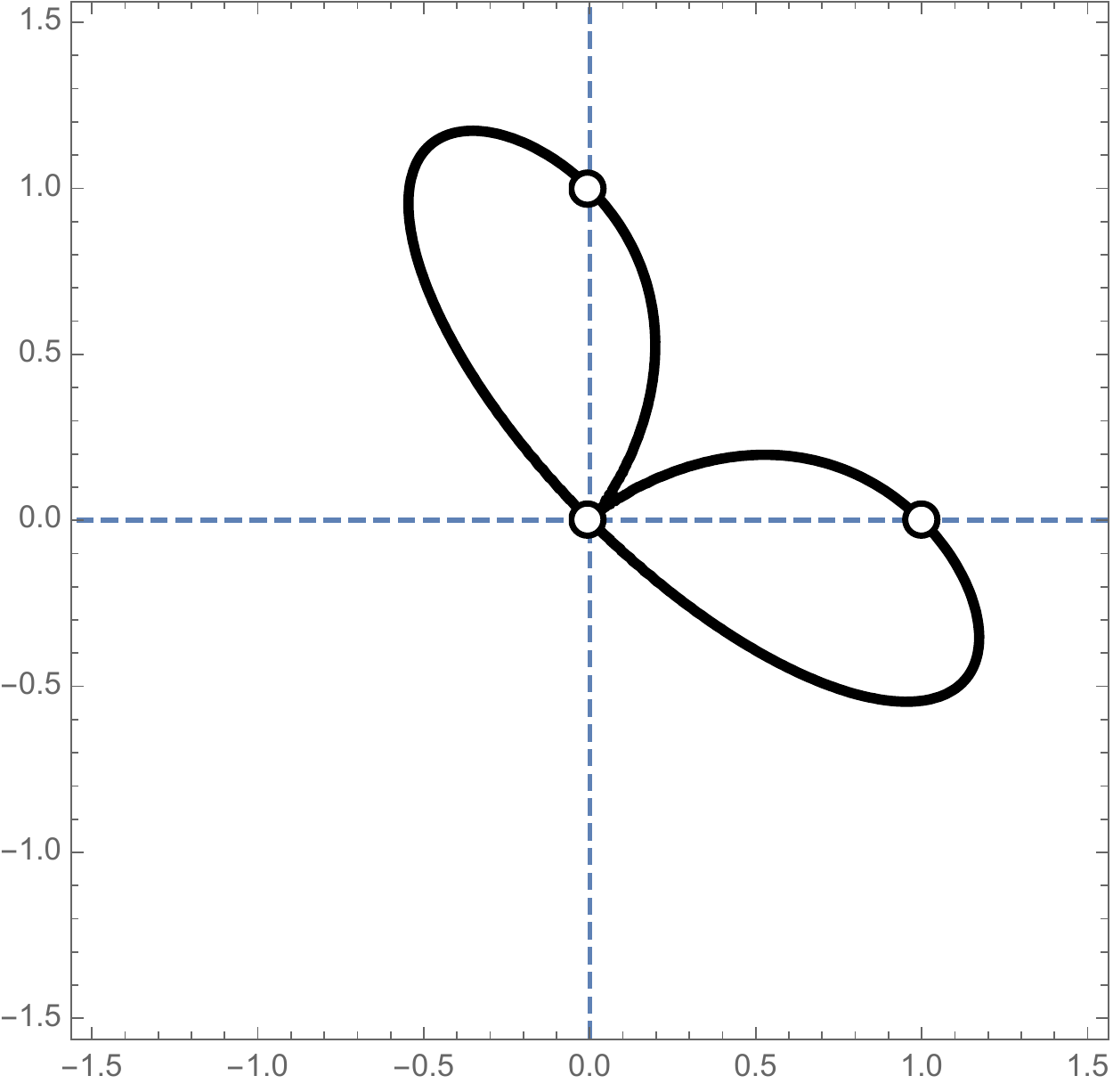} \quad 
\includegraphics[width=1.8in]{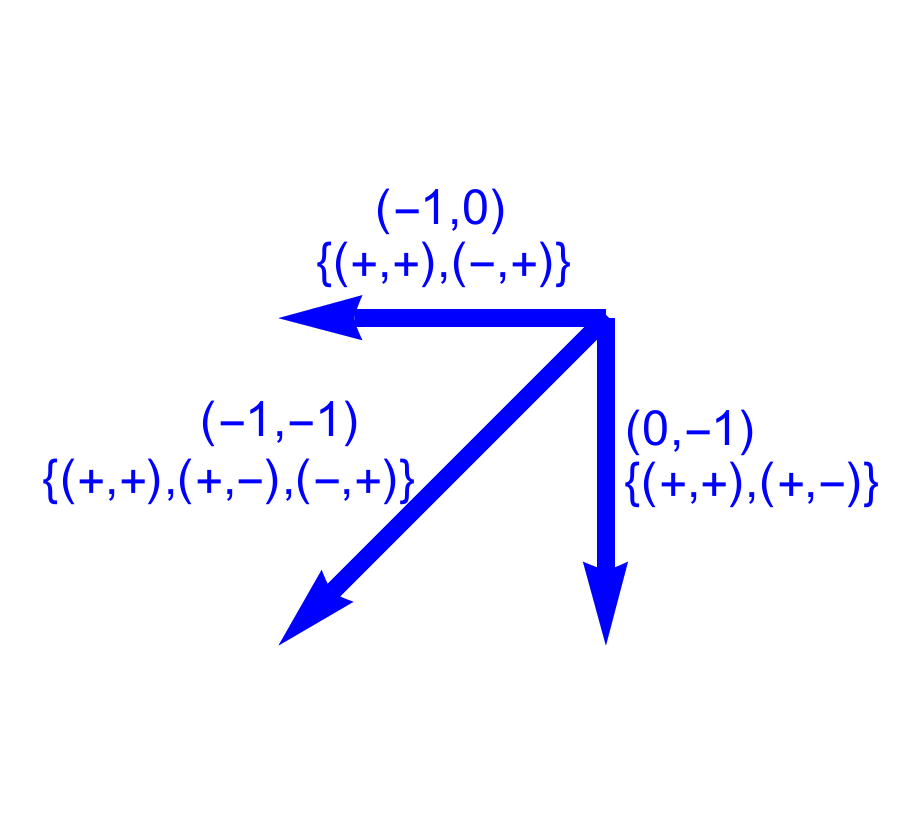}\quad
\includegraphics[width=1.8in]{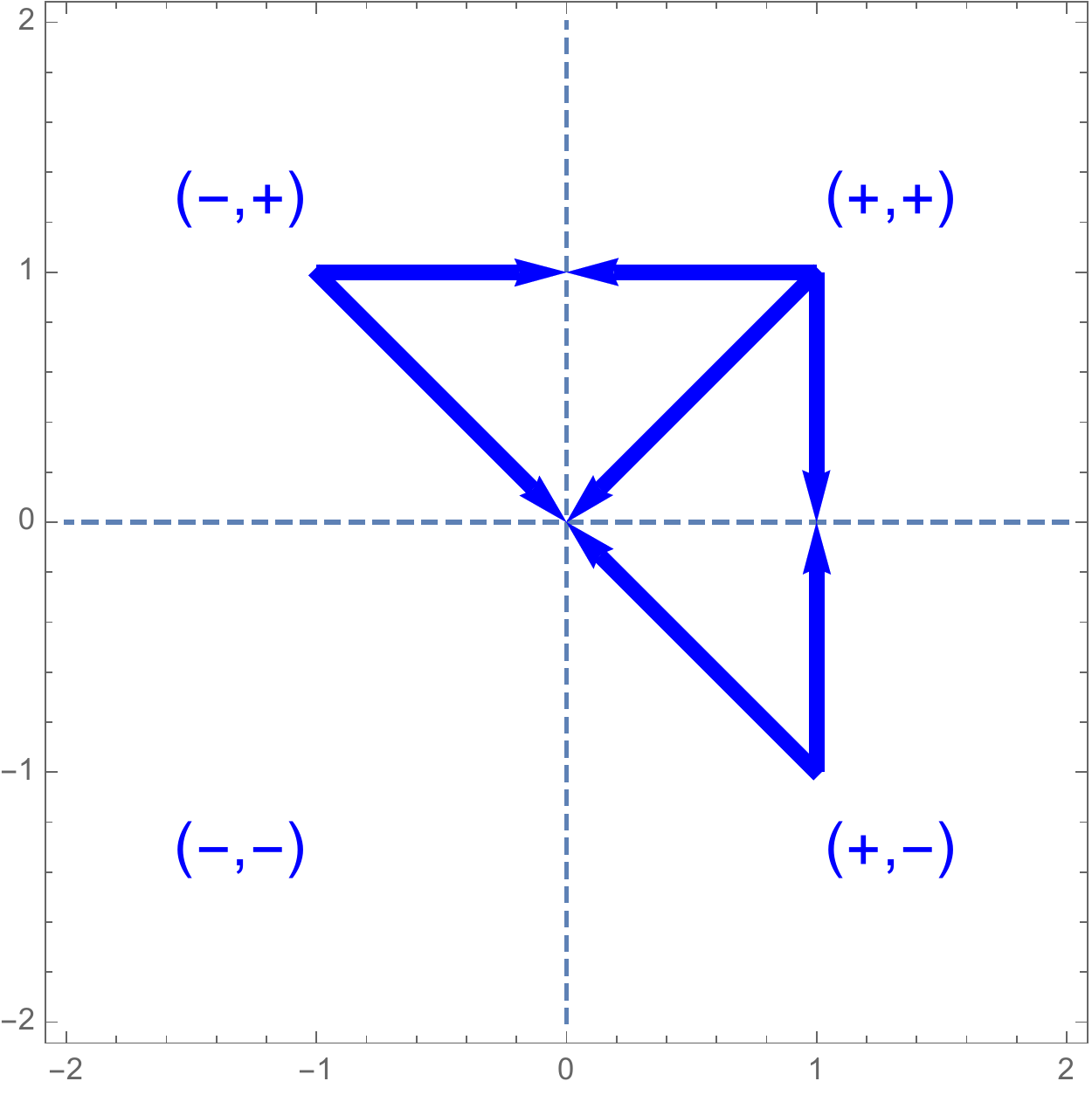}
\end{center}
\caption{The real points of a quartic plane curve with its signed tropical variety.}
\label{fig:butterfly}
\end{figure}
\end{center}

\section{Cauchy's integral formula and projectivization}\label{sec:Cauchy}

\subsection{Monodromy and Cauchy's integral formula}

Monodromy is related to Riemann surfaces, branch cuts, and singularities
in that one aims to study an object by walking around
in closed loops which can induce a non-trivial action.  
In the context of tropical curves, we will always reduce
down to the locally analytic case by performing a local uniformization
of complex curves, e.g., \cite[Thm.~A.3.2]{SW05}, after computing
the {\em cycle number}, which we now define for the case of interest. 
Let $C\subset \C^n$ be a curve not contained in a hyperplane 
and $p\in C$ whose $j^{\rm th}$ 
coordinate $p_j = \tau\neq 0$ is sufficiently small. 
This setup defines $P(s)\in C$ where $P(1) = p$ and $P_j(s) = \tau s$,
which is smooth for $0 < |s| \leq 1$.
For $\theta\geq0$, consider the continuous and periodic function
$\theta\mapsto P(e^{2\pi i\theta})$.
The {\bf cycle number} for the path $P(s)$ is the
minimum positive integer $c$ such that $P(e^{2\pi i c}) = p = P(1)$.
Since $C\cap\V(x_j - \tau)$ consists of at most $\deg C$ points,
it immediately follows that $1 \leq c \leq \deg C$.

In terms of Puiseux series, the cycle number $c$ is 
the minimum positive integer $n$ such that 
every coordinate of $P(s)$ lies in $\C((s^{1/n}))$.
That is, the cycle number $c$ is the minimum positive integer $n$
such that the function $s\mapsto P(s^n)$ is analytic on $0 < |s|\leq 1$.
In particular, if $\lim_{s\rightarrow0} P(s)\in\C^n$, 
then each coordinate of the function $s\mapsto P(s^c)$ is analytic on $|s|\leq 1$.

Suppose a function $f:\C\rightarrow\C$ is analytic on the unit disk in $\C$, specifically 
on and inside the closed loop $\gamma$ given by $\{e^{i\theta}~|~0\leq\theta\leq 2\pi\}$.
For $k\in\Z_{\geq0}$, Cauchy's integral formula yields
\begin{equation}
\frac{f^{(k)}(0)}{k!} \ \ = \ \ \frac{1}{2\pi i}\int_\gamma {\frac{f(z)}{z^{k+1}} dz} \ \ = \  \ \frac{1}{2\pi}\int_0^{2\pi} {\frac{f(e^{i\theta})}{(e^{i\theta})^{k}} d\theta}.  \label{eqn:cauchy_integral}
\end{equation}
In particular, $f$ has a power series expansion around $z=0$ and its valuation is the smallest $k$ for which $f^{(k)}(0)$ is non-zero. 
Using numerical path tracking, one can generate a discretization of $\gamma$ and numerically approximate $f^{(k)}(0)$. 

\subsection{Projectivization and affine patches}\label{sec:patching}

In addition to reducing down to the locally analytic case,
another key idea in our algorithm is to work on an affine patch 
for which the valuation of each point of interest is nonnegative.  
This corresponds with finite length solution paths that limit
to a coordinate~hyperplane. These paths can be parametrized
by tuples of Puiseux series with nonnegative valuations and therefore correspond to 
 points in the tropical variety with nonpositive coordinates. 
 
To that end, suppose that $C\subset(\C^*)^n$ is 
a curve defined by an ideal $I\subset\C[x_1,\dots,x_n]$.
In order to simplify the computations, we will first consider
the closure of $C$ in $\P^n$, namely
$$\overline{C} \ = \  \overline{\{[1:y_1: \hdots: y_n]~|~(y_1, \hdots, y_n)\in C\}} \ \subset \ \P^n.$$
The finitely many points in the boundary $\partial C = \overline{C}\backslash C$
are contained in the coordinate hyperplanes in~$\P^n$.
The ``hyperplane at infinity'' in $(\C^*)^n$ becomes one of these coordinate hyperplanes, namely $\{x_0=0\}$. 

We next restrict to
an affine coordinate patch.  That is, for a nonzero vector $v\in\C^{n+1}$,
consider the intersection $\widehat{C}$ of the affine cone over $\overline{C}$ with
the plane defined by $v\cdot x = 1$. Since the set of points of interest on $\overline{C}$ are its intersections with 
the coordinate hyperplanes, we require that each of these
finitely-many points correspond to a finite point in the coordinate patch defined
by~$v\cdot x = 1$,~i.e.,
\begin{equation} \label{eq:patch}
\{x\in \P^n~|~v\cdot x =0 \} \cap \{x \in \overline{C}~|\text{ some coordinate $x_i$ is zero }\}\ \ = \ \ \emptyset.
\end{equation}
If $d = \deg C$, there are at most $(n+1)d$ points in the intersection
of $\overline{C}$ with the union of the coordinate hyperplanes.
Hence, we must select $v\in\C^{n+1}$ outside of a hypersurface of degree 
at most $(n+1)d$.
In practice, we select the vector $v\in\C^{n+1}$ randomly.
When computing $\TropR$, the entries of the
vector $v$ should be real to maintain a relationship
between the real points of $C$ 
and the real points of $\widehat{C}$.
Under these hypotheses, 
the complex and real tropical varieties of $C$ can be recovered from those of $\widehat{C}$.

\begin{Prop}\label{prop:CtoCHat}If $v\in (\k^*)^{n+1}$ satisfies \eqref{eq:patch} where $\k = \R$ or $\C$, then 
$\Trop_{\k}(C)$ equals the image of $\Trop_{\k}(\widehat{C}) \cap (\R_{\leq 0})^{n+1}$ under the map 
$(w_0, \hdots, w_n) \mapsto (w_1 - w_0, \hdots, w_n - w_0)$.
Furthermore, for $\k=\C$, this map preserves multiplicities of rays from $\Trop_\C(\widehat{C})$ to $\Trop_\C(C)$. 
\end{Prop}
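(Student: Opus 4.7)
The plan is to write down an explicit correspondence between Puiseux paths in $C$ and those in $\widehat{C}$ that manifestly transforms valuations by the stated formula, and then handle the multiplicity claim via initial ideals.

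For the set equality, I proceed in two directions. Given a Puiseux tuple $y(t) \in (\k\ps^*)^n$ satisfying the ideal of $C$, with $w = -\val(y)$, set $M = \max(0, w_1, \ldots, w_n)$ and $\tilde{y}(t) = (t^M, t^M y_1(t), \ldots, t^M y_n(t))$, a lift to the affine cone over $\overline{C}$. Every coordinate of $\tilde{y}$ has nonnegative valuation, and unless $w = 0$, the limit $\tilde{y}(0) \in \overline{C}$ has at least one zero coordinate; condition \eqref{eq:patch} then gives $v \cdot \tilde{y}(0) \neq 0$, so $\val(v \cdot \tilde{y}) = 0$. Hence $x(t) := \tilde{y}(t)/(v \cdot \tilde{y}(t))$ lies in $\widehat{C}$ with $-\val(x) = (-M, w_1 - M, \ldots, w_n - M) \in (\R_{\leq 0})^{n+1}$, projecting to $w$. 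Conversely, for a Puiseux tuple $x(t) \in \widehat{C} \cap (\k\ps^*)^{n+1}$ with $-\val(x) \in (\R_{\leq 0})^{n+1}$, the ratios $y_i(t) := x_i(t)/x_0(t)$ define a Puiseux tuple in $C$ whose negated valuations are the projection of $-\val(x)$. Taking closures and noting that rays of $\Trop_\k(\widehat{C})$ outside $(\R_{\leq 0})^{n+1}$ come from paths hitting $\overline{C} \cap \{v \cdot x = 0\}$ (which by \eqref{eq:patch} misses the coordinate hyperplanes of $\P^n$) and hence all point in the direction $(1, \ldots, 1)$ projecting to $0$, completes the set equality.

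For the multiplicity claim over $\C$, I would use the characterization in terms of minimal primes of the initial ideal. Writing $\widehat{I} \subset \C[x_0, \ldots, x_n]$ for the ideal of $\widehat{C}$, a direct computation exploiting the total-degree homogeneity of each homogenization $f^h$ (for $f$ in the ideal of $C$) shows that $\V(\In_{w'}(\widehat{I}))$ is the affine slice of the projective closure $\V((\In_w(I))^h) \subset \P^n$ by the initial form of the patch equation $v \cdot x - 1$. For generic $v$ this slice is transverse to every minimal prime, yielding a multiplicity-preserving bijection of minimal primes and hence of ray multiplicities.

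The main obstacle is the multiplicity claim: one must check carefully that the initial form of $v \cdot x - 1$, which depends on which components of $w'$ vanish, neither merges nor splits any minimal prime of $(\In_w(I))^h$ and preserves multiplicities. This reduces to a genericity statement on $v$ only slightly stronger than \eqref{eq:patch}, which holds for generic $v \in (\k^*)^{n+1}$. The Puiseux-path correspondence and scaling by $M$ are routine once the right normalization is identified.
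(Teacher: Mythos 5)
Your set-theoretic argument is correct and essentially matches the paper's: you scale by $t^M$ (with $M=\max\{0,w_1,\dots,w_n\}$) to lift a Puiseux point of $C$ into the affine cone over $\overline C$, then normalize by $v\cdot\tilde y$, which has valuation $0$ exactly because condition \eqref{eq:patch} forces the linear form not to vanish at the limiting point on a coordinate hyperplane. This is the same normalization the paper uses (there, one multiplies $(1,y)$ by $1/\ell(1,y)$), and the converse (taking ratios $x_i/x_0$) is the same. The only cosmetic difference is that the paper reduces to rational points and never needs the closure remark you append, which, as written, is a bit murky but inessential.

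The multiplicity claim, however, is where your argument has a genuine gap, and you underestimate it. You write that ``a direct computation exploiting the total-degree homogeneity of each homogenization $f^h$ shows that $\V(\In_{w'}(\widehat I))$ is the affine slice of $\V((\In_w(I))^h)$ by the initial form of $v\cdot x-1$.'' That identity is \emph{not} a direct computation: in general one only has $\In_{w'}(\overline I+\langle\ell-1\rangle)\supseteq\In_{w'}(\overline I)+\In_{w'}(\langle\ell-1\rangle)$, and the reverse containment can fail. The paper proves the equality as a separate lemma (Lemma~\ref{lem:InSum}) under the hypothesis that $\In_{w}(\ell)$ is a non-zero-divisor modulo $\In_{w}(\overline I)$, and establishes that hypothesis (Lemma~\ref{lem:Zdiv}) directly from \eqref{eq:patch} via the leading coefficients of a witnessing Puiseux point. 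Your proposed fix---``a genericity statement on $v$ only slightly stronger than \eqref{eq:patch}''---proves a weaker result than the proposition, which asserts the conclusion for \emph{every} $v$ satisfying \eqref{eq:patch}, not just generic ones. So to close the gap you must either prove the non-zero-divisor property from \eqref{eq:patch} alone (as the paper does) or accept a strictly stronger hypothesis than the stated one. Additionally, once the initial-ideal identity is in hand, you still need the commutative-algebra fact that slicing a homogeneous ideal by $l(x)-1$ with $l$ a non-zero-divisor gives a multiplicity-preserving bijection of minimal primes; the paper applies this twice, once with $l=x_0$ and once with $l=\In_w(\ell)$, to compare $\TropC(C)$ and $\TropC(\widehat C)$ via the intermediate cone over $\overline C$. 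Your sketch collapses these steps into an appeal to ``transversality,'' which does not by itself control the minimal-prime decomposition or the multiplicities.
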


For the proof of this proposition below, we need two consequences of
 condition \eqref{eq:patch}.

\begin{Lemma}\label{lem:Zdiv} Let $\ell = v\cdot x\in \C[x_0, \hdots, x_n]_1$ where  $v\in (\C^*)^{n+1}$ satisfies \eqref{eq:patch}, and suppose that
$w\in \Trop(\overline{C}) \cap (\R_{\leq 0})^{n+1}$ with $w_j=0$ for some $j$ and $w_k\neq0$ for some $k$. 
Then the initial form $\In_w(\ell)$ does not vanish at any point $a\in (\C^*)^{n+1}$ in the variety of 
the initial ideal $\In_w(I(\overline{C}))$.
In particular, $\In_w(\ell)$ is not a zero-divisor modulo the ideal $\In_w(I(\overline{C}))\subset \C[x_0^{\pm1}, \hdots, x_n^{\pm1}]$. 
\end{Lemma}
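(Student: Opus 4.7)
The plan is to argue by contradiction: assume $\In_w(\ell)(a)=0$ for some $a\in(\C^*)^{n+1}\cap\V(\In_w(I(\overline{C})))$, lift $a$ to a Puiseux-series point of $\overline{C}$, take its projective limit as $t\to 0$, and show that the resulting point contradicts the patching condition~\eqref{eq:patch}. First I would unpack the initial form: since every $w_i\leq 0$ and $w_j=0$, the maximum of $\{w_0,\ldots,w_n\}$ is $0$, so $\In_w(\ell)=\sum_{i\in S}v_ix_i$ where $S=\{i:w_i=0\}$ is nonempty (it contains $j$) but proper (it excludes $k$).

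For the lift I would invoke the fundamental theorem of tropical geometry \cite{MSbook}: since $a\in(\C^*)^{n+1}\cap\V(\In_w(I(\overline{C})))$, there exists $y(t)\in\C\ps^{n+1}$ with $y\in\V_{\C\ps^*}(I(\overline{C}))$, $-\val(y)=w$, and $\coeff(y)=a$. Because $\val(y_j)=-w_j=0$, the series $y_j(t)$ is a unit in $\C\ps$, and the ratios $\tilde y_i(t)=y_i(t)/y_j(t)$ all have valuation $-w_i\geq 0$. These ratios therefore converge as $t\to 0$ to $a_i/a_j$ when $w_i=0$ and to $0$ when $w_i<0$. The limit $p=[\tilde y_0(0):\cdots:\tilde y_n(0)]\in\P^n$ then lies in $\overline{C}$ (as a limit of points of $\overline{C}$) and satisfies $p_k=0$. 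Hypothesis~\eqref{eq:patch} therefore forces $\ell(p)\neq 0$, yet a direct computation gives
\[
\ell(p)=\sum_{i=0}^n v_ip_i=\frac{1}{a_j}\sum_{i\in S}v_ia_i=\frac{1}{a_j}\,\In_w(\ell)(a)=0,
\]
the contradiction that finishes the first claim.

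For the ``in particular'' statement, I would appeal to the standard tropical fact that, for a prime ideal $I\subset\C[x_0,\ldots,x_n]$ and any weight $w$, the localization $\In_w(I)\cdot\C[x_0^{\pm 1},\ldots,x_n^{\pm 1}]$ has no embedded associated primes; this is precisely the structural input used to attach multiplicities to rays of $\TropC(I)$ in \cite[\S 3.4]{MSbook}. The zero-divisors modulo this ideal are then exhausted by the functions vanishing on some irreducible component of $\V(\In_w(I(\overline{C})))\cap(\C^*)^{n+1}$, and the non-vanishing established in the first part rules this out for $\In_w(\ell)$. The main obstacle I anticipate is the correct invocation of the Puiseux lift with the prescribed leading coefficient $a$, but this is exactly the existence direction of the fundamental theorem, so it should present no real difficulty.
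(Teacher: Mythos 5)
Your proof of the first claim is correct and essentially the same as the paper's: lift $a$ to a Puiseux-series point of $\overline{C}$ with prescribed valuation and leading coefficient via the existence direction of the fundamental theorem (the paper cites \cite[Prop.~3.2.11]{MSbook}), observe that its limit as $t\to 0$ is a point of $\overline{C}$ lying in a coordinate hyperplane (since $w_k\neq 0$) with $\ell$ vanishing on it, and conclude that this contradicts \eqref{eq:patch}. Your normalization by $y_j(t)$ before taking the limit is a cosmetic variant; the paper instead uses the nonnegativity of all $-w_i$ to evaluate $y$ at $t=0$ directly.

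One small remark on the ``in particular'' clause, which the paper leaves implicit: the appeal to the absence of embedded primes of $\In_w(I(\overline{C}))$ in the Laurent ring is heavier machinery than you need. Since the first part shows $\In_w(\ell)$ vanishes nowhere on $\V(\In_w(I(\overline{C})))\cap(\C^*)^{n+1}$, the Nullstellensatz gives that $\In_w(\ell)$ lies outside \emph{every} prime of $\C[x_0^{\pm1},\ldots,x_n^{\pm1}]$ containing $\In_w(I(\overline{C}))$. In particular it avoids every associated prime --- minimal or embedded --- and since the zero-divisors modulo an ideal are exactly the union of its associated primes, $\In_w(\ell)$ is a non-zero-divisor with no need to rule out embedded components.
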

\begin{proof} Let $J$ denote the homogeneous ideal $I(\overline{C})$, and suppose that $\In_w(\ell)$ is vanishes at some point $a = (a_0,\hdots, a_n) \in \V(\In_w(J)) \cap (\C^*)^{n+1}$.  
Since each coordinate of $w$ is non-positive and some coordinate is zero, the initial 
form of $\ell$ is the sum $\sum_{i\in \mathcal{A}}v_ix_i$ where $\mathcal{A} = \{i~|~ w_i=0\}$.  
Taking $e_i$ to be the $i$th coordinate vector, we see that $\ell( \sum_{i\in \mathcal{A}}a_ie_i)=\In_w(\ell)(a)  = 0$. 

On the other hand, by \cite[Prop. 3.2.11]{MSbook}, there exists a point $y\in \V_{\C\ps_{conv}}(J)$ 
with $-\val(y) = w$ and $\coeff(y) = a$. The leading terms of $y$ are then $(a_0t^{-w_0} , \hdots, a_nt^{-w_n})$, and
the Puiseux series $y_i(t)$ converges for small enough $t$.  Since each exponent $-w_i $ is nonnegative,
the coordinates $y_i(t)$ converge for $t=0$. Thus $y(0) =    \sum_{i\in \mathcal{A}}a_ie_i$ belongs to $\overline{C}$. 

Consider the point $p = \sum_{i\in \mathcal{A}}a_ie_i$. Since $w_j=0$ and $w_k\neq 0$, the point $p$ has $j^{\rm th}$ coordinate $a_j\neq0$ and 
$k^{\rm th}$ coordinate zero.  Also $\ell(p)=0$ and $p\in \overline{C}$. Thus $p$ belongs to the intersection in \eqref{eq:patch} and  $v$ does not satisfy
condition \eqref{eq:patch}. 
\end{proof}

\begin{Lemma}\label{lem:InSum}
Suppose that $J \subset \C[x_0, \hdots, x_n]$ is homogeneous ideal, 
$\ell \in \C[x_0, \hdots, x_n]_1$, and $w\in\R^{n+1}$.
If $\In_w(\ell)$ is not a zero-divisor modulo $\In_w(J)$, then 
\begin{equation}\label{eq:InitialIdealSum}
\In_w(J + \langle \ell-1\rangle) \ \ = \ \ \In_w(J )+\In_w( \langle \ell-1\rangle).
\end{equation}
\end{Lemma}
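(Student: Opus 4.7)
The inclusion $\supseteq$ is immediate: $J \subset J + \langle \ell - 1 \rangle$ and $\langle \ell - 1 \rangle \subset J + \langle \ell - 1 \rangle$ force $\In_w(J)$ and $\In_w(\langle \ell - 1 \rangle)$ both into $\In_w(J + \langle \ell - 1 \rangle)$. All of the content lies in the reverse inclusion.

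For $\subseteq$, my plan is a ``minimal representation'' argument on the $w$-weighted degree $\deg_w$. Given nonzero $g \in J + \langle \ell - 1 \rangle$, among all decompositions $g = f + h(\ell - 1)$ with $f \in J$ and $h \in \C[x_0, \hdots, x_n]$ I would choose one that minimizes $d := \max\{\deg_w(f),\, \deg_w(h(\ell - 1))\}$. The goal is to show that in such a minimal representation $d = \deg_w(g)$. Once this holds, $\In_w(g)$ decomposes into at most two nonzero pieces, $\In_w(f) \in \In_w(J)$ and $\In_w(h)\,\In_w(\ell - 1) \in \langle \In_w(\ell - 1)\rangle \subset \In_w(\langle \ell - 1\rangle)$, so $\In_w(g) \in \In_w(J) + \In_w(\langle \ell - 1\rangle)$.

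Suppose for contradiction that $d > \deg_w(g)$. Then the two summands have $\deg_w = d$ and their top-weight parts must cancel, giving $\In_w(f) = -\In_w(h)\,\In_w(\ell - 1)$. In particular $\In_w(h)\,\In_w(\ell - 1) \in \In_w(J)$, and the nonzero-divisor hypothesis yields $\In_w(h) \in \In_w(J)$. A weight-homogeneous lift --- writing $\In_w(h) = \sum a_i \In_w(f_i)$ with $f_i \in J$ and each $a_i \In_w(f_i)$ homogeneous of $w$-degree $\deg_w(h)$, then setting $q := \sum a_i f_i$ --- produces $q \in J$ with $\In_w(q) = \In_w(h)$ and hence $\deg_w(h - q) < \deg_w(h)$. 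Since $q(\ell - 1) = q\ell - q \in J$, the rewritten decomposition
\[
g \ =\ \bigl(f + q(\ell - 1)\bigr) \,+\, (h - q)(\ell - 1)
\]
still has its first summand in $J$ and its second in $\langle \ell - 1\rangle$, while direct computation shows that the degree-$d$ part of $f + q(\ell - 1)$ vanishes (it equals $\In_w(f) + \In_w(h)\,\In_w(\ell - 1) = 0$) and $\deg_w((h-q)(\ell-1)) < d$. This contradicts the minimality of $d$. The main obstacle is this bookkeeping in the cancellation step --- simultaneously exhibiting the lift $q$, verifying cancellation in one summand, and verifying degree drop in the other --- and it is precisely here that the nonzero-divisor hypothesis is consumed.
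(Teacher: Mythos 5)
There is a genuine gap in the step where you apply the hypothesis. You conclude from $\In_w(h)\,\In_w(\ell-1)\in\In_w(J)$ that $\In_w(h)\in\In_w(J)$ by invoking "the nonzero-divisor hypothesis," but the hypothesis concerns $\In_w(\ell)$, not $\In_w(\ell-1)$, and these are different objects in general. Concretely, write $\ell=\sum v_ix_i$ and let $\mu=\max_i w_i$. If $\mu>0$, then indeed $\In_w(\ell-1)=\In_w(\ell)$ and your step is fine; if $\mu<0$, then $\In_w(\ell-1)=-1$, a unit, and the conclusion is trivial. But if $\mu=0$, then $\In_w(\ell-1)=\In_w(\ell)-1$, which is $w$-homogeneous of weight $0$ but is not equal to $\In_w(\ell)$. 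From $\In_w(h)\bigl(\In_w(\ell)-1\bigr)\in\In_w(J)$ you cannot directly infer $\In_w(h)\in\In_w(J)$: nothing in the hypotheses says $\In_w(\ell)-1$ is a nonzero-divisor modulo $\In_w(J)$. This is exactly where the homogeneity of $J$ is needed, and your proof never uses it.

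The fix (and it is the key idea in the paper's proof) is to take a second initial form with respect to $\mathbf 1=(1,\ldots,1)$ after taking $\In_w$. Because $J$ is homogeneous, $\In_w(J)$ is also homogeneous, so $\In_{\mathbf 1}(\In_w(J))=\In_w(J)$; and $\In_{\mathbf 1}\bigl(\In_w(\ell-1)\bigr)=\In_w(\ell)$ in all the non-degenerate cases, since the degree-one part dominates the constant $-1$. Passing to $\In_{\mathbf 1}$ of both sides of your cancellation identity turns $\In_w(\ell-1)$ into the honest $\In_w(\ell)$, and then the nonzero-divisor hypothesis applies to give $\In_{\mathbf 1}(\In_w(h))\in\In_w(J)$, from which a lift in $J$ can be chosen and the descent runs. (A secondary caveat with your formulation: minimizing $d=\max\{\deg_w(f),\deg_w(h(\ell-1))\}$ is not obviously well-founded when $w$ has irrational coordinates, since $\{w\cdot\alpha:\alpha\in\N^{n+1}\}$ need not be well-ordered; the paper sidesteps this by instead descending on the integer-valued total degree $\deg(h)$.)
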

\begin{proof}
The containment $\supseteq$ is clear. To show the other containment, consider the set 
\[ S \ = \  \{(g,h) ~|~ \In_w(g + (\ell-1)h)\  \not\in \ \In_w(J )+\In_w( \langle \ell-1\rangle)\} \  \subset \  J\times \C[x_0, \hdots, x_n].\]
For the sake of contradiction, suppose that $S$ is non-empty and take $(g,h)\in S$ with minimal $\deg(h)$. 
Since $\In_w(g + (\ell-1)h)$ is not equal to  $\In_w(g) + \In_w((\ell-1)h)$, the sum of these leading terms must be zero.  
Taking further initial forms with respect to $\1 = (1,\hdots, 1)$ yields
$-\In_{\1}(\In_w(g)) \ = \ \In_{\1}(\In_w((\ell-1)h))  = \In_{\1}(\In_w(\ell-1))\cdot  \In_{\1}(\In_w(h)) $.
 
 If $\In_w(\ell - 1) = 1$, then the ideals on both sides of \eqref{eq:InitialIdealSum} are the ideal $\langle 1\rangle$ and thus equal.  
 Therefore we may assume that $\In_w(\ell)\neq1$, in which case $\In_\1(\In_w(\ell-1)) = \In_w(\ell)$. Putting this together with the arguments above, we see that 
 the product $\In_w(\ell)\cdot  \In_{\1}(\In_w(h))$ belongs to $\In_\1(\In_w(J))$. Because $J$ is homogeneous, the ideal $\In_w(J)$ is also homogeneous, 
by \cite[Lemma 2.4.2]{MSbook}. Hence $\In_\1(\In_w(J)) = \In_w(J)$. 
Thus, $\In_w(\ell)\cdot  \In_{\1}(\In_w(h))$ belongs to the ideal $\In_w(J)$. By assumption, $\In_w(\ell)$ is not a zero-divisor 
modulo $\In_w(J)$, so $\In_{\1}(\In_w(h))$ must belong to $\In_w(J)$. Therefore we can take $f\in J$ with $\In_w(f) = \In_{\1}(\In_w(h))$. 

Now consider $(g',h') = (g + (\ell-1)f, h-f)$. Since $f\in J$, $g + (\ell-1)f$ belongs to $J$.  Also, the sums $g'+(\ell-1)h'$ and $g+(\ell-1)h$ are equal, and therefore 
have equal initial forms. This shows that $(g',h')$ belongs to the set $S$.  On the other hand, because $\In_w(f) = \In_{\1}(\In_w(h))$,  $h' = h-f$ has strictly smaller degree than $h$, 
which contradicts our choice of $(g,h)$. 
\end{proof}

\begin{proof}[Proof of Proposition~\ref{prop:CtoCHat}]
We first prove the set-wise statement for $\k= \R, \C$. Let $K$ denote the field $\k\ps$. 
We use the notation $I = I(C)$, $\overline{I} = I(\overline{C})$, $\ell = v\cdot x$, and $\widehat{I} = \overline{I} + \langle \ell - 1\rangle  = I(\widehat{C})$. 
Because all the rays are defined over $\Q$, 
it suffices to prove that the two sets, $\Trop_\k(C)$ and the image of $\Trop_{\k}(\widehat{C} \cap (\R_{\leq 0})^{n+1})$, have the same points in $\Q^n$. 
   
($\supseteq$) Suppose $w \in \Trop_\k(\widehat{C}) \cap (\Q_{\leq 0})^{n+1}$.  Then there exists $y = (y_0, \hdots, y_n)\in \V_{K^*}( \widehat{I})$ 
with $\val(y) = -w$.  By homogeneity, the point $(1,y_1/y_0 ,\hdots, y_n/y_0)$ belongs to $\V_{K^*}(\overline{I})$, meaning that 
$(y_1/y_0 ,\hdots, y_n/y_0)$ belongs to $\V_{K^*}(I)$.  The negative of the valuation of this point is $(w_1-w_0, \hdots, w_n-w_0)$ and belongs to $\Trop_\k(C)$. 

($\subseteq$) Suppose $u\neq0$ belongs to $\Trop_\k(C) \cap \Q^n$.
Let $w = (0,u) - \max\{0, u_1, \hdots, u_n\} \1$. Then $w\in \Trop(\overline{C})\cap(\Q_{\leq 0})^{n+1}$ and $w$ has some zero coordinate from the attained maximum 
and some non-zero coordinate from non-equal coordinates of $w=(0,u)$. Lemma~\ref{lem:Zdiv} then implies that 
$\In_{w}(\ell)=\In_{(0,u)}(\ell)$ does not vanish any point in $\V(\In_w(\overline{I}))\cap(\C^*)^{n+1}$. 

Now take $y\in \V_{K^*}(I)$ with $-\val(y)=u$. Then $(1,y)$ belongs to $\V_{K^*}(\overline{I})$ and 
has $-\val(1,y) = (0,u)$ and $\coeff(1,y) = (1,a)$ for some $a\in (\C^*)^n$.  
In particular, $(1,a)$ belongs to the variety of the initial ideal $\In_{(0,u)}(\overline{I})$, meaning that 
$\In_{(0,u)}(\ell)(1,a)$ is not zero.  It follows that $\ell(1,y)$ equals $ct^q + \text{higher order terms}$, 
with leading coefficient $c=\In_{(0,u)}(\ell)(1,a)$ and valuation $q = \min\{0,\val(y_1), \hdots, \val(y_n)\} = -\max\{0,u_1, \hdots, u_n\}$. 

For every $\lambda\in K^*$, the point $(\lambda, \lambda y)$ belongs to $\V_K(\overline{I})$. Let $\lambda$ equal $1/\ell(1,y)$. 
Then the point $(\lambda, \lambda y)$ satisfies $\ell(\lambda, \lambda y) =1$ and belongs to $\V_K(\widehat{I})$. Furthermore
\[ \hbox{\small $-\val(\lambda, \lambda y) = -\val(1,y) - \val(\lambda) \1 = (0,u) +\val(\ell(1,y))\1 = (0,u) -\max\{0,u_1, \hdots, u_n\}\1 =w.$}  \]
Thus $w\in \Trop_\k(\widehat{C})$.  Also note that $w\in (\R_{\leq 0})^{n+1}$ and $u = (w_1-w_0, \hdots, w_n-w_0)$.

Now we fix $\k=\C$ and show that the multiplicity of $w$ in $\TropC(\widehat{C})$ equals the multiplicity of $(w_1-w_0,\hdots, w_n-w_0)$ in $\TropC(C)$. 
It is a general commutative algebra fact that if~$P$ is a minimal prime of a homogeneous ideal $J$ with multiplicity ${\rm mult}(P,J)=m$ 
and \mbox{$l(X)\in \C[x_0,\hdots, x_n]_1$}~is~not a zero-divisor modulo $J$, then $P+\langle l(x)-1\rangle$ is a minimal prime of $J + \langle l(x)-1\rangle$ 
with the same multiplicity $m$. 

Applying this with $J = \In_w(\overline{I})$ and $l(x)=x_0$ shows that the multiplicity of a ray $\vec{u}$ in $\TropC(I)$ equals the multiplicity 
of the cone $\R_+(0,u)+\R{\bf1}$ in $\TropC(\overline{I})$. On the other hand, by Lemmas~\ref{lem:Zdiv}~and~\ref{lem:InSum}, 
we can also apply this with $l(x) = \In_w(\ell)$. Note that since $w$ belongs to $(\R_{\leq 0})^{n+1}\cap \TropC(\ell-1)$, we have that $\In_w(\ell -1 ) = \In_w(\ell)-1$. 
This shows that the multiplicity of a ray $\vec{w}\in \Trop(\widehat{C})$ 
is that same as the multiplicity of $\vec{w} +\R\bf\1$ in $\TropC(\overline{I})$.  Putting these together, that a ray $w\in \TropC(\widehat{I})\cap(\R_{\leq 0})^{n+1}$ 
has the same multiplicity as the ray of $(w_1-w_0, \hdots, w_n-w_0)$ in $\TropC(I)$.
\end{proof}

This allows us to compute $\Trop_\k(C)$ by computing $\Trop_\k(\widehat{C})\cap(\R_{\leq 0})^{n+1}$. As described in the next section, 
the benefit of focusing on rays with nonpositive coordinates is that they correspond to paths in a variety with coordinates approaching zero, rather than infinity. 

\section{Algorithms}\label{sec:Algorithms}

The following describes algorithms for computing complex and real tropical curves.  
These algorithms use local uniformization of complex curves, e.g. \cite[Thm.~A.3.2]{SW05},
to transform Puiseux series into power series, which allows us to compute
valuations and multiplicities using Cauchy integrals. 
Given a curve $C\subset \C^n$ with corresponding curve $\widehat{C}\subset\C^{n+1}$ 
which is the intersection of the closure of $C$ in $\P^n$ and an affine
coordinate patch satisfying \eqref{eq:patch}, 
a rough outline of our strategy for computing its tropicalization is as follows: 
\begin{enumerate}
\item As defined in Section~\ref{sec:EOZ}, compute a point $\tau_j$ in the endgame operating zone of $\widehat{C}$ with respect to $x_j$ for $j=0,\hdots, n$.
\item As described in Section~\ref{sec:Val}, for each point $p \in \widehat{C}$ with some zero coordinate $p_j=0$ and continuous path 
$P(s)_{s\in[0,1]} \subset \widehat{C}$ such that $P_j(s) = \tau_j \cdot s$, compute an analytic reparametrization of $P(s)$ 
and use Cauchy integrals to compute the valuation of a power series expansion of each coordinate. 
\item  Collect all valuations with appropriate multiplicities to obtain $\Trop(C)$.
\end{enumerate}
The algorithms for computing $\TropC(C)$ (Section~\ref{sec:complex_algorithm}) and $\TropR(C)$ (Section~\ref{sec:real_tropical_algorithm}) 
rely on computations that are common to both.  For ease of exposition, we break these into separate algorithms, which are discussed in Section~\ref{subsec:InternalAlg}. 

\subsection{Algorithms common to $\C$ and $\R$}\label{subsec:InternalAlg}

There are two sub-algorithms which are common to both the complex and real 
algorithms.   First, we describe how to ensure that we are inside the radius
of convergence of the Puiseux series, which is 
called the {\em endgame operating zone}, e.g., see \cite[\S 10.3.1]{SW05}.
Once inside the endgame operating zone, we next describe computing
valuations and multiplicities via Cauchy's integral formula 
after computing a uniformization.  These two algorithms 
are essential in our approach for computing complex and real tropical curves.

\subsubsection{Finding the endgame operating zone}\label{sec:EOZ}

A main tool in the computations below is the parametrization of a curve $X\subset\C^{n+1}$ 
in a neighborhood of a point $p\in X$. 
One can parametrize a branch of the curve around this point by the value of 
some variable, say $x_j=s$.
Each of the other coordinates are then locally functions of $s$ that 
can be expressed as Puiseux series. 
To be meaningful, the computations below 
must take place in the radius of convergence of these Puiseux series.  
The domain of convergence of all the Puiseux series of the coordinates of a curve around a point is 
called the {\bf endgame operating zone}.  This zone is calculated  
by computing the critical points with respect to the 
parameterizing variable $x_j$.  Explicitly, 
suppose that $X$ is an irreducible curve not contained in any 
coordinate hyperplane and $f = (f_1,\hdots, f_{n}) \in (\k[x_0, \hdots, x_n])^{n}$ is a polynomial system 
such that $X$ is an irreducible component of the solution set of $f = 0$
which has multiplicity $1$ with respect to $f$. 
The set of critical points of $X$ with respect to $x_j$ and $f$ 
is the set of $x\in X$ for which $Jf(x)_{\widehat{j}}$
has a nonzero null vector.  Here we use $Jf(x)_{\widehat{j}}$ to denote the Jacobian matrix of $f$ evaluated
at $x$ with the $j^{\rm th}$ column removed.  

In Algorithm~\ref{alg:EOZ}, we actually compute a smaller threshold 
to simplify the computation of the real tropical curve, as we
will see in Section~\ref{sec:real_tropical_algorithm}.

\noindent\begin{algorithm}[h]
\SetKwInOut{Input}{Input}\SetKwInOut{Output}{Output}
\Input{An irreducible curve $X\subset\C^{n+1}$ not contained in 
a coordinate hyperplane and a polynomial system $f$
such that $X$ is an irreducible component of the solution set of $f = 0$
of multiplicity $1$; 
the set $\Lambda\subset\C^{n+1}$ consisting of points $X\cap\V(x_0\hdots x_n)$; 
index $j\in \{0,\hdots, n\}$.}
\Output{$\tau_j \in \R_{>0}$ such that the disk of radius $\tau_j$ centered at any point in $\Lambda$ 
is contained in the endgame operating zone of $X$ with respect to $x_j$.}
\BlankLine
\nl Compute the set $S$ of critical points of $X$ with respect to $x_j$ and $f$, i.e.,
$S = \{x\in X~|~Jf(x)_{\widehat{j}}\mbox{~has a nonzero null vector}\}$\;
\nl Compute $T_j = \{\abs(\pi_j(S))\} \cup \{\abs(\pi_j(\Lambda))\} \subset \R_{\geq 0}$
where $\pi_j(y_0, \hdots, y_n) = y_j$ \;
\nl Set $T_j^\ast = T_j\setminus\{0\}$ \;
\nl \Return $0 < \tau_j < \min(T_j^\ast)$, or some arbitrary positive number if $T_j^\ast$ is empty \; 
\BlankLine 
\caption{Computing $\tau_j$ inside the endgame operating zone of $X$ with respect to $x_j$.}
\label{alg:EOZ}
\end{algorithm}

The following is immediate from the definition of the endgame operating zone.

\begin{Prop}\label{Prop:EOZ}
Algorithm~\ref{alg:EOZ} returns $\tau_j > 0$ which is 
inside the endgame operating zone with respect to $x_j$.
\end{Prop}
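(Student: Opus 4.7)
The plan is to argue that the proposition is essentially immediate once one unpacks the definition of the endgame operating zone and invokes the classical theory of Puiseux parametrizations of algebraic curves (as in \cite[\S 10.3.1]{SW05}), which the proposition's statement already anticipates by the remark ``immediate from the definition.'' So rather than prove anything deep, my goal is to make the three-line chain of reasoning rigorous.

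First I would recall the definition precisely: the endgame operating zone of $X$ with respect to $x_j$ is the largest $\tau > 0$ such that, for every $p \in \Lambda$, each branch of $X$ through $p$ admits a Puiseux parametrization in the variable $x_j$ that converges on the punctured disk $0 < |x_j - p_j| < \tau$ (equivalently, on $|x_j| < \tau$ when $p_j = 0$, which is the case of interest for the tropical algorithm). Next I would cite the standard fact that the only obstructions to extending such a Puiseux parametrization along a branch of an irreducible curve are the branch points of the projection $\pi_j\colon X \to \C$ given by $x \mapsto x_j$, together with other points of the fiber that belong to $\Lambda$ (to avoid confusing branches through distinct points of $\Lambda$ sharing a $j$-th coordinate). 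Because $X$ appears in the solution set of $f = 0$ with multiplicity one, the branch points of $\pi_j$ are precisely the images under $\pi_j$ of the points where the Jacobian with the $j$-th column deleted drops rank, that is, $\pi_j(S)$.

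From this the conclusion is short. For any $p \in \Lambda$, the minimal obstruction to analytic continuation in the disk $\{|x_j| < \tau\}$ centered at $0$ (or equivalently at $p_j$ when $p_j = 0$) occurs at a point $q \in S \cup \Lambda$ with $|q_j| > 0$, so the radius of convergence is at least $\min(T_j^\ast)$. Any $\tau_j$ strictly smaller than this minimum therefore lies inside the endgame operating zone. If $T_j^\ast = \emptyset$, then $\pi_j$ has no branch points and no additional $\Lambda$-points with nonzero $j$-th coordinate, so the Puiseux series converge globally and any positive choice of $\tau_j$ is valid.

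I do not expect a serious obstacle here; the only subtle point is writing the definition of endgame operating zone in a form that makes the relationship to critical values of $\pi_j$ transparent. The multiplicity-one hypothesis on $X$ with respect to $f$ is used exactly to identify these critical values as $\pi_j(S)$ via the rank condition on $Jf(x)_{\widehat{j}}$; without it, the Jacobian criterion could fail, but with it the identification is clean and the proposition follows directly.
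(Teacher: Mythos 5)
Your argument is in line with the paper's, whose own ``proof'' is the single sentence ``immediate from the definition of the endgame operating zone''; the paper's informal definition already states that ``this zone is calculated by computing the critical points with respect to the parameterizing variable $x_j$.'' You are supplying the justification the paper elects not to write out, and the core of it is correct: the multiplicity-one hypothesis lets one read off the critical values of $\pi_j|_X$ from the rank drop of $Jf(x)_{\widehat{j}}$, and any $\tau_j$ strictly below the nearest nonzero critical value stays inside the radius of convergence of every Puiseux parametrization centered at a point of $\Lambda$ with vanishing $j$-th coordinate.

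Two small corrections, though. First, the reason $\abs(\pi_j(\Lambda))$ is folded into the threshold is not that other $\Lambda$-points obstruct convergence of the series (a smooth non-critical point of $X$ lying on another coordinate hyperplane does not shrink the radius); rather, as the paper notes just before Algorithm~\ref{alg:EOZ}, the smaller threshold guarantees that no coordinate of a tracked path changes sign on $(0,1]$, which is precisely what the correctness proof of Algorithm~\ref{alg:TropR} uses. Second, ``branch points plus $\Lambda$-points'' is not a complete list of obstructions for a general input curve: a branch of $X$ can have a pole, i.e., some coordinate can tend to infinity at a finite value of $x_j$, and such a pole is invisible to the Jacobian criterion that defines $S$. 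This does not matter here because the only curve the paper ever passes to Algorithm~\ref{alg:EOZ} is $X=\widehat{C}$, and condition \eqref{eq:patch} forces every point at infinity of $\widehat{C}$ to have all projective coordinates nonzero, so $x_j\to\infty$ along any escaping branch and $\pi_j|_{\widehat{C}}$ is proper; still, a version of the proposition for an arbitrary input curve would need this properness made explicit.
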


\subsubsection{Computing valuations and multiplicities}\label{sec:Val}

Once inside the endgame operating zone, one can use path
tracking to compute valuations of a Puiseux series expansion.  
In Algorithm~\ref{alg:trop_rays}, we compute the primitive integer vector on the ray spanned by the valuation of this Puiseux series.
For a curve $X$, 
the input consists of a point $p\in X$ such that $p_j = \tau_j$
where $\tau_j$ is computed as in Algorithm~\ref{alg:EOZ}.
Then, we consider the path $P(s)\subset X$ parametrized
by $P_j(s) = \tau_j s$ for $s\in[0,1]$ with $P(1) = p$.  
In particular, this path $P(s)$ is a function of $s$ 
which corresponds with a convergent Puiseux series.  

\begin{algorithm}[h]
\SetKwInOut{Input}{Input}\SetKwInOut{Output}{Output}
\Input{An irreducible curve $X\subset\C^{n+1}$ not contained in 
a coordinate hyperplane; $j \in \{0,\hdots, n\}$;
$\tau \neq  0$ inside of the endgame operating zone of $X$ with respect to $j$; and
a point $p\in X$ such that $p_j = \tau$ which defines
the path $P(s)\in X\cap\V(x_j-\tau s)$ for $s\in[0,1]$ where $P(1) = p$.}
\Output{The primitive vector in $\Z_{\geq 0}^{n+1}$ of the ray spanned by the valuation of $P(s)$.}

\BlankLine
\nl Compute cycle number $c$ of the path $P(s)$ \;
\nl \For {$k$ from 0 to $n$} 
{
	 \nl Initialize with $u_k = -1$ and $a = 0$ \;  
	 \While {$a=0$} 
		{
		\nl Update $u_k \gets u_k+1$ \;
		\nl Update $a\gets \int_{0}^{2\pi}P_k(e^{c\cdot i\theta})/(e^{i\theta})^{u_k}d\theta$ \;
		}
}
\nl Set $u = (u_0, \hdots, u_n) \in  \Z_{\geq 0}^{n+1}$ \; 
\nl Compute $g =\gcd(u)$ \;
\nl \Return Primitive vector $r =  u/g \in \Z_{\geq 0}^{n+1} $ \;
\BlankLine 
\caption{Computing the primitive vector corresponding to a path valuation.}
\label{alg:trop_rays}
\end{algorithm}

The following shows that the path $P(s)$ is parametrized by a Puiseux series and that Algorithm~\ref{alg:trop_rays} correctly computes its valuation. 
 
\begin{Prop}\label{Prop:Path2Puiseux}
Let $p\in X \subset \C^{n+1}$, $j$,  and $\tau$ be the input of Algorithm~\ref{alg:trop_rays} and $r\in \Z_{\geq 0}^{n+1}$ be the output.
Then there is a point $y\in (\C\ps)^{n+1}$ in $\V_{\C\ps}(I(X))$ that converges for $t\in [0,1]$
and satisfies $y(s) = P(s)$ for all $s\in [0,1]$.  The valuation of $y$ is $(1/r_j) \cdot r\in \Q^{n+1}_{\geq 0}$. 
\end{Prop}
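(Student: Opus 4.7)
The plan is to show that the input path $P(s)$ is the restriction to $s\in[0,1]$ of a locally convergent Puiseux series $y(s)$, then identify the valuation of $y$ with the integer vector $u$ computed by the algorithm, and finally relate $u$ to the output $r$.

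First, I would use the endgame operating zone hypothesis to establish analyticity. Because $\tau$ is chosen strictly smaller than the absolute value of the $j$\textsuperscript{th} coordinate of any critical point of $X$ with respect to $x_j$ (and smaller than the $j$\textsuperscript{th} coordinate of any point of $X\cap\V(x_0\cdots x_n)$ other than zero), the affine curve $X\cap\V(x_j-\tau s)$ is smooth for $0<|s|\leq 1$, and the path $P(s)$ never leaves the affine chart. In particular $P$ is holomorphic on a neighborhood of the punctured closed disk. By the definition of cycle number in Section~\ref{sec:Cauchy}, the reparametrization $s\mapsto P(s^c)$ is single-valued and analytic on the punctured disk. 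Moreover, since the limit of $P(s)$ as $s\to 0$ lies on $\widehat{C}\cap\V(x_j)$, a finite point by the affine patch condition \eqref{eq:patch}, the reparametrization extends analytically to $s=0$ by Riemann's removable singularity theorem applied coordinatewise.

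Next, I would let $P_k(s^c)=\sum_{m\geq 0}a_{k,m}s^m$ be the resulting power series, and define $y_k(s)=\sum_{m\geq 0}a_{k,m}s^{m/c}\in\C\ps$. Then $y$ is a locally convergent Puiseux series tuple that agrees with $P$ on $[0,1]$, and since $P(s)\in X$ for $s\in[0,1]$, the series $y$ identically satisfies every polynomial in $I(X)$, hence $y\in\V_{\C\ps}(I(X))$.

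For the valuation, Cauchy's integral formula \eqref{eqn:cauchy_integral} applied to the analytic function $s\mapsto P_k(s^c)$ gives
\[
\frac{1}{2\pi}\int_0^{2\pi}\frac{P_k(e^{ci\theta})}{(e^{i\theta})^{u_k}}\,d\theta \;=\;\frac{1}{u_k!}\left.\frac{d^{u_k}}{ds^{u_k}}P_k(s^c)\right|_{s=0}\;=\;a_{k,u_k},
\]
so the loop in Algorithm~\ref{alg:trop_rays} terminates at the smallest $u_k\geq 0$ with $a_{k,u_k}\neq 0$; that is, $u_k$ is the valuation of $P_k(s^c)$ as a power series in $s$, equivalently $\val(y_k)=u_k/c$. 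Since $P_j(s)=\tau s$, we have $P_j(s^c)=\tau s^c$, so the algorithm returns $u_j=c$. Hence $\val(y)=u/c\in\Q_{\geq 0}^{n+1}$. Dividing both $u$ and $c=u_j$ by $g=\gcd(u)$ yields $\val(y)=r/r_j=(1/r_j)\cdot r$, as claimed.

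The one point that needs a little care is the extension of $P(s^c)$ across $s=0$: this relies on the fact that $\widehat C$ was set up precisely so that the boundary points are finite in the chosen affine patch (Proposition~\ref{prop:CtoCHat}), together with Algorithm~\ref{alg:EOZ} guaranteeing no critical values lie between $0$ and $\tau$. The remaining identifications are bookkeeping about Puiseux versus power series and the normalization by $\gcd$.
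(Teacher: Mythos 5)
Your proof is correct and follows essentially the same route as the paper: uniformize by the cycle number $c$, use the endgame operating zone to extend $s\mapsto P(s^c)$ analytically across $s=0$, read off the coefficients of the resulting power series via Cauchy's integral formula to identify $u_k$ with $\val_s(P_k(s^c))$, set $y(t)=\widetilde y(t^{1/c})$, and normalize using $u_j=c$ and $g=\gcd(u)$. The only cosmetic difference is that you invoke Riemann's removable singularity theorem directly where the paper cites the local uniformization theorem \cite[Thm.~A.3.2]{SW05}, and you make the identity $u_j=c$ explicit where the paper states it implicitly via $\val(y_j)=1$.
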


\begin{proof}
Since $c>0$ is the cycle number, local uniformization, e.g., \cite[Thm.~A.3.2]{SW05},
yields that $s\mapsto P(s^c)$ is analytic with respect to~$s$ on a neighborhood of $s=0$. 
Because $\tau$ was chosen inside the endgame operating zone of $C$ with respect to $x_j$, 
this neighborhood contains the unit disk $\{z\in \C~|~|z|\leq 1\}$. 
Taking power series expansions of each coordinate gives $\widetilde{y}\in (\k[[t]]_{conv})^{n+1}$ 
with $\widetilde{y}(s) = P(s^c)$ for all $|s|\leq 1$. 

We claim that $u$ is the valuation of $\widetilde{y}$.  To see this, write the $k$th coordinate as 
$\widetilde{y}_k(t) = \sum_\ell a_\ell  \ t^\ell $ where $a_\ell \in \C$. Then for any $\ell\in \Z_{\geq 0}$, 
the integral $ \int_{0}^{2\pi}P_k(e^{c\cdot i\theta})/e^{i \ell \theta}d\theta $
equals the integral $ \int_{0}^{2\pi}\widetilde{y}_k(e^{i\theta})/e^{i \ell \theta}d\theta $.  By the Cauchy integral formula
this integral equals $a_{\ell}/2\pi$.  Algorithm~\ref{alg:trop_rays} computes $u_k = \val(\widetilde{y}_k)$ as the minimum 
$\ell$ for which this integral is nonzero. 

Now set $y(t) = \widetilde{y}(t^{1/c})\in \C\ps^{n+1}$.  Then $\val(y) = (1/c) \val(\widetilde{y}) = (1/c)u$. 
Since $\widetilde{y}$ converges for $t\in [0,1]$, so does $y$. 
Furthermore, for $s\in [0,1]$, we have $y(s) = \widetilde{y}(s^{1/c}) = P(s)$. 
In particular, 
the $j^{\rm th}$ coordinate of $y$ is $y_j = \tau t$ and $\val(y_j) = 1$.  
The valuation of $y$ is a multiple of $u$ with $j^{\rm th}$ coordinate 1, meaning that $\val(y) = (1/r_j)\cdot r$. 
\end{proof}

Furthermore, any Puiseux series $y\in \V_{\k\ps_{conv}}(I(X))$ with non-negative valuations
and~$j^{\rm th}$ coordinate $y_j = \tau t$ parametrizes such a path in $X$. 

\begin{Prop}\label{Prop:Puiseux2Path}
Suppose that $y\in \V_{\k\ps_{conv}}(I(X) + \langle x_j - \tau t\rangle)$ and $\val(y)\in (\Q_{\geq 0})^{n+1}$. 
Then~$y$ converges for $t\in [0,1]$ and $p = y(1)$ defines a viable input to Algorithm~\ref{alg:trop_rays}. 
\end{Prop}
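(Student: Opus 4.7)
The plan is to verify the conditions required for $p = y(1)$ to be a legitimate input to Algorithm~\ref{alg:trop_rays}, namely that $p \in X$, that $p_j = \tau$, and that $y(s)$ is defined and lies in $X \cap \V(x_j - \tau s)$ for every $s \in [0,1]$. The substantive content is convergence of $y$ on the full closed interval $[0,1]$; everything else will follow immediately from $y_j = \tau t$ and the fact that $y$ satisfies the defining equations of $X$.

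For the convergence, I would first invoke local uniformization: substituting $t = s^c$ for the least common denominator $c$ of the exponents appearing in the $y_i$ turns each coordinate into a genuine power series in $s$, convergent on some disk about $s=0$ because $y \in \k\ps_{conv}^{n+1}$. Since $\val(y_i) \geq 0$ for each $i$, these power series extend continuously to $s=0$, so $\widetilde{y}(s) := y(s^c)$ defines an analytic parametrization of a branch of (the closure of) $X$ at a limit point whose $j^{\rm th}$ coordinate vanishes. The hypothesis that $\tau$ lies inside the endgame operating zone of $X$ with respect to $x_j$ is, by definition, precisely the statement that every such branch converges on a disk whose image under $x_j$ contains $\{|x_j| \leq |\tau|\}$. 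Combined with the identity $y_j = \tau t$, this translates directly to convergence of $y$ on $\{|t| \leq 1\}$, and in particular on $[0,1]$.

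With convergence in hand, continuity lets me evaluate: for each $f \in I(X)$, the formal identity $f(y(t)) = 0$ passes to the convergent expansion, so $y(s) \in X$ for all $s \in [0,1]$. Setting $s = 1$ yields $p = y(1) \in X$, and $y_j = \tau t$ gives $p_j = \tau$. Defining $P(s) := y(s)$ produces a continuous path in $X \cap \V(x_j - \tau s)$ with $P(1) = p$, matching the input specification of Algorithm~\ref{alg:trop_rays} exactly.

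The only real obstacle is upgrading the abstract hypothesis ``$y$ is locally convergent'' to actual convergence on all of $[0,1]$. This is where the endgame-operating-zone assumption earns its keep: it is the precise condition that equates the Puiseux radius of convergence (measured in $t$ via the relation $y_j = \tau t$) with the value $1$. Once this translation is made, the remainder of the argument is bookkeeping.
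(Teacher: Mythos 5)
Your proof is correct and follows essentially the same path as the paper's: the endgame-operating-zone hypothesis supplies convergence on $(0,1]$, the nonnegative valuations supply continuity at $t=0$, and the identity $y_j = \tau t$ together with vanishing on $I(X)$ pins down the path as required by the input specification. The only divergence is stylistic: you explicitly pass through the local uniformization $t = s^c$, whereas the paper argues directly from convergence of the Puiseux series without reparametrizing (the uniformization step is deferred to the proof of Proposition~\ref{Prop:Path2Puiseux}, where it is actually needed for the Cauchy-integral argument); for the present proposition the substitution is harmless but unnecessary.
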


\begin{proof}
Since $y$ is convergent in a neighborhood of $t=0$ and $\tau$ was chosen inside the endgame operating zone 
of $X$ with respect to $x_j$, we know $y$ converges for $s\in (0,1]$. 
We see immediately that $p = y(1)$ has $j^{\rm th}$ coordinate $p_j = y_j(1) = \tau$. 
Since each coordinate $y_k$ has nonnegative valuation, it has a well-defined value at $t=0$. 
Therefore $P(s) = y(s)$ defines a continuous path 
in $X\cap \V(\langle x_j - \tau s\rangle)$ for $s\in [0,1]$.
\end{proof}

%
%
\subsection{Computing complex tropical curves}
\label{sec:complex_algorithm}

By using Algorithms~\ref{alg:EOZ} and~\ref{alg:trop_rays},
we now present an approach for computing complex tropical
curves of an irreducible curve $C\subset(\C^*)^n$, namely Algorithm~\ref{alg:TropC}.
We assume that we are given a polynomial system $f$
such that $C$ is an irreducible component of the solution set
of $f = 0$ which has multiplicity~$1$ with respect to $f$.
As discussed in Section~\ref{sec:patching}, we also 
start with the irreducible curve $\widehat{C}\subset\C^{n+1}$ 
with corresponding polynomial system $\widehat{f}$, i.e.,
$\widehat{C}$ is an irreducible component of the solution set
of $\widehat{f} = 0$ which has multiplicity $1$ with respect to~$\widehat{f}$.

\noindent \begin{algorithm}[ht]
\SetKwInOut{Input}{Input}\SetKwInOut{Output}{Output}
\Input{An irreducible curve $C\subset(\C^*)^{n}$; a polynomial 
system $f$ such that $C$ is an irreducible
component of multiplicity one of the solution set $f = 0$;
and a vector 
$v\in (\C^*)^{n+1}$ satisfying condition \eqref{eq:patch}.}
\Output{$\TropC(C)$, a collection of primitive vectors in $\Z^n$ with multiplicity.}
\BlankLine
\nl Initialize $\TropC(C) = \emptyset$ \; 
\nl Define $\widehat{C} = \overline{C}\cap\V(v\cdot x -1)$ and $\widehat{f} = \overline{f}\cup\{v\cdot x-1\}$ \;
\nl Set $\Lambda =\widehat{C}\cap\V(x_0\cdot x_1\cdots x_n)$\;
\nl Partition $\Lambda = \sqcup_j \, \Lambda_j$, where $\Lambda_j = \{ x \in \Lambda~|~x_0,\ldots,x_{j-1} \neq 0, x_j = 0 \}$ \;
\nl \For {$j$ from 0 to $n$} 
{
	\nl \If {$\Lambda_j = \emptyset $}
	{
	\nl Continue \; 
	}
		\nl Call Algorithm~\ref{alg:EOZ} using $\widehat{C}$, $\widehat{f}$, $\Lambda$, and $j$
	yielding $\tau_j > 0$ \;	
	\nl Compute $C_j^\tau = \widehat{C} \cap \V(x_j - \tau_j)\subset\C^{n+1}$ \;
	\nl \nosemic Compute $\Lambda_j^\tau$ which consists of $p\in C_j^\tau$ such that the path starting, with $s=1$, at $p$ \;
	 \pushline \dosemic defined by $\widehat{C}\cap\V(x_j - \tau_j s)$ for $s\in[0,1]$ 
	 ends at a point in $\Lambda_j$ \;
	 \popline 
	 \nl \For {each $p \in \Lambda_j^\tau$}
	{
		\nl Call Algorithm~\ref{alg:trop_rays} using $\widehat{C}$, $j$, $\tau_j$, and $p$ yielding $r\in \Z_{\geq 0}^{n+1}$\;
		\nl Add $(r_0-r_1,\dots,r_0-r_n)$ to $\TropC(C)$ with multiplicity contribution $1/r_j$\;
	}
}
\nl \Return $\TropC(C)$ \;
\BlankLine 
\caption{Computation of $\TropC$}\label{alg:TropC}
\end{algorithm}
\smallskip 

Before proving that Algorithm~\ref{alg:TropC} computes the tropical
curve of $C$, we consider an illustrative example that demonstrates
the steps of the algorithm.

\begin{Example}\label{ex:IllusEx}
Consider the quartic curve $C \subset \C^2$ defined by $f = x_1^3 x_2 - x_1x_2^3 + x_1^3  - x_2^2  =0$, shown in Figure~\ref{fig:patches}. 
Its homogenization $\overline{f} =x_1^3 x_2 - x_1x_2^3 + x_0x_1^3  - x_0^2x_2^2   $ defines a curve $\overline{C}\subset \P^2$, 
and the vector $v=(1,1,2)$  satisfies the condition   \eqref{eq:patch}. 
Thus, $\widehat{C} \subset \C^3$ is the curve 
defined by $\overline{f}=0$ and $x_0+x_1+2x_2=1$.
The set $\Lambda$ consists of the five intersection points of $\widehat{C}$ with $\V(x_0x_1x_2)$. We partition the points by their first zero coordinate:
\[\Lambda_0 = \{(0, 1, 0), (0, 1/3, 1/3), (0, 0, 1/2), (0, -1, 1)\}, \ \ \ \Lambda_1 = \{(1,0,0)\}, \ \text{ and } \ \Lambda_2 = \emptyset.\]
For each $j=0,1,2$, Algorithm~\ref{alg:EOZ} computes $\tau_j>0$ inside the endgame operating zone
by calculating the projections of the points $\Lambda$ onto each coordinate and 
the values $t\in\C$ at which the plane $x_j-t$ is tangent to the curve $\widehat{C}$. 
For $j=0$ the minimum of non-zero absolute value of these numbers is $\min(T_0^*) \approx 0.2162$. 
Indeed, as seen in Figure~\ref{fig:patches}, the plane $x_0=-\min(T_0^*)$ is tangent to $\widehat{C}$. 
For $j=1$, we find $\min(T_1^*) \approx   0.2483$, which is attained by a complex tangency.
We take $\tau_j$ to be any positive number less that $\min(T_j^*)$, for example $\tau_0=\tau_1= 0.1$.  Then, the paths defined by the intersection of $\widehat{C}$ and a
disk of radius~$\tau_j$ around each point in $\Lambda_j$ correspond with 
convergent Puiseux series.  

For each $j=0,1,2$, we track points limiting to the points in $\Lambda_j\subset \V(x_j)$. 
For example, for $j=0$, we compute $C_0^{\tau} = \widehat{C}\cap\V(x_0 - 0.1)$, which consists of four points in $\C^{3}$, each of which tracks to a unique point in 
$\Lambda_0$. Therefore $C_0^{\tau}  = \Lambda_0^{\tau}$.
We apply Algorithm~\ref{alg:trop_rays} to each point $p\in \Lambda_0^{\tau}$. 
The point $p\approx(0.1, -0.022, 0.461)\in \Lambda_0^{\tau}$ defines a path 
$P(s)\in\widehat{C}\cap \V(x_0-0.1 s)$ with $P(1) = p$ and $P(0) = (0,0,1/2)$. The cycle number of this path is $c=1$, meaning that the map 
$z \mapsto P(z)$ is analytic for $|z|\leq 1$.  In particular, each coordinate $g_j(z) = P_j(z)$ has a power series expansion 
in $z$. Using Cauchy's integral formula, we can find the leading term in this power series. By definition $g_0(z) =0.1 z$. For $g_1(z) = P_1(z)$, 
we use numerical integration to calculate that 
$$\hbox{\small $\displaystyle \frac{g_1^{(k)}(0)}{k!} = \frac{1}{2\pi} \int_0^{2\pi} \frac{g_1(0.1 e^{i\theta})}{(0.1 e^{i\theta})^{k}} d\theta = 0 \ \text{ for $k=0,1$, and  }  \ \frac{g_1^{(2)}(0)}{2!} = \frac{1}{2\pi} \int_0^{2\pi} \frac{g_1(0.1 e^{i\theta})}{(0.1 e^{i\theta})^{2}} d\theta = -0.02$}.$$
Therefore, the leading term of the power series expansion of $g_1(z)$ is $\frac{g_1^{(2)}(0)}{2!}z^2 = -0.02z^2$. Similarly, we see that 
$g_2(z) = 1/2 + \text{higher degree terms}$.
Therefore Algorithm~\ref{alg:trop_rays} outputs $r=(1,2,0)$.  This contributes $(r_0-r_1, r_0-r_2) = (-1,1)$
to $\TropC(C)$ with multiplicity $1$. 

On the other hand, for $j=1$, $C_1^{\tau} = \widehat{C}\cap\V(x_1 - 0.1)$ consists of four points in $\C^{3}$, 
two of which are complex and limit to the point $(0,0,1/2) \in \Lambda_0$, two of which are real and 
track to $(1,0,0)\in \Lambda_1$. The latter two are $\Lambda_1^{\tau}$. The point $p\approx(0.8293, 0.1, 0.0354)\in \Lambda_1^{\tau}$ defines a path 
$P(s)\in\widehat{C}\cap \V(x_1-0.1 s)$ with $P(1) = p$ and $P(0) = (1,0,0)$. The cycle number of this path is $c= 2$, meaning that the map 
$z \mapsto P(z^2)$ is analytic for $|z^2|\leq 1$.  As above we use Cauchy's integral formula to 
compute power series expansions of each coordinate $g_j(z) = P_j(z^2)$, to find 
$P(z^2) = (1, 0.1z^2, (0.1)^{3/2} z^3) + \text{higher degree terms}$, giving $r = (0,2,3)$. 
This contributes $(r_0-r_1, r_0-r_2) = (-2,-3)$ to $\TropC(C)$ with multiplicity $1/r_1 = 1/2$.
Repeating this process for the second point $p\approx(0.9635, 0.1, -0.0317)\in \Lambda_1^{\tau}$ also gives the ray $(-2,-3)$ 
with multiplicity $1/2$, for total multiplicity of $1$.  The other rays in $\TropC(C)$ are computed similarly, giving the tropical curve in Figure~\ref{fig:patches}. 
\end{Example}

\begin{center}
\begin{figure}
\begin{center}
\includegraphics[width=1.8in]{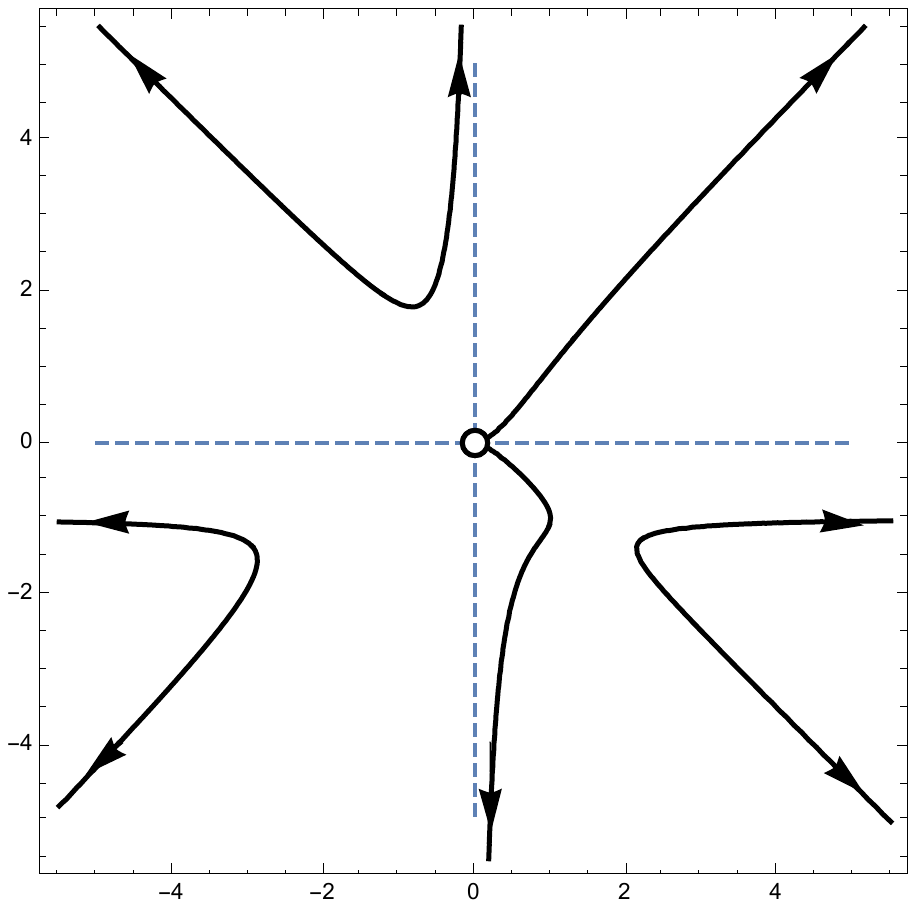} \quad \quad 
\includegraphics[width=1.8in]{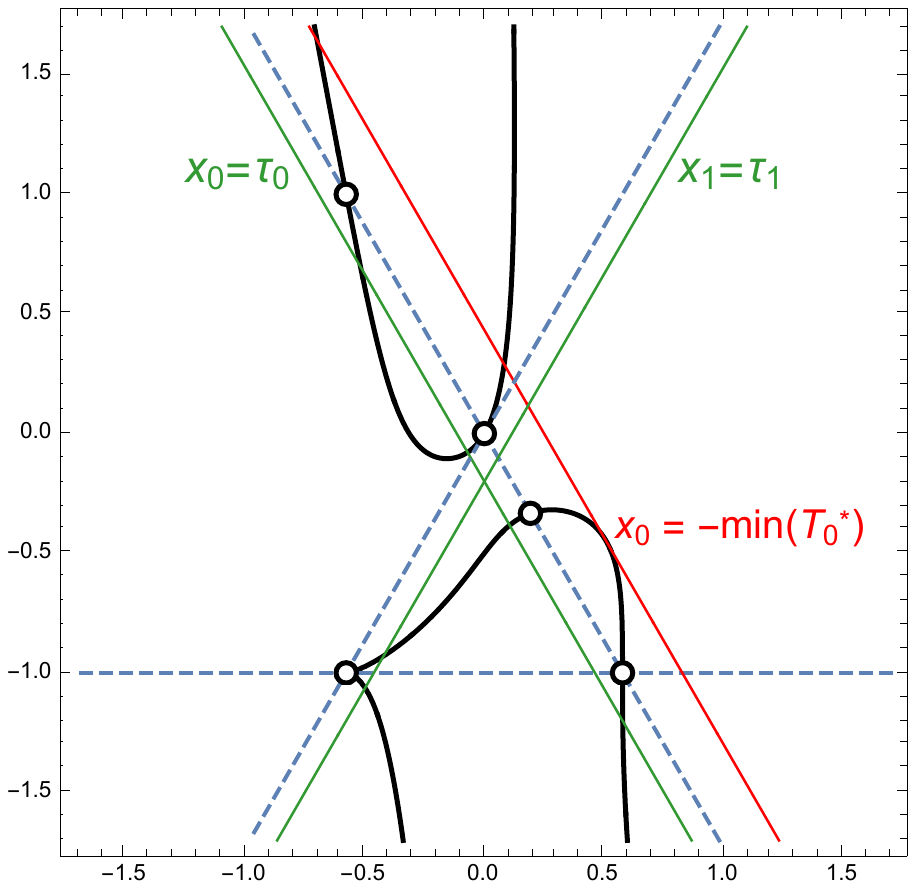}\quad \quad 
\includegraphics[width=1.6in]{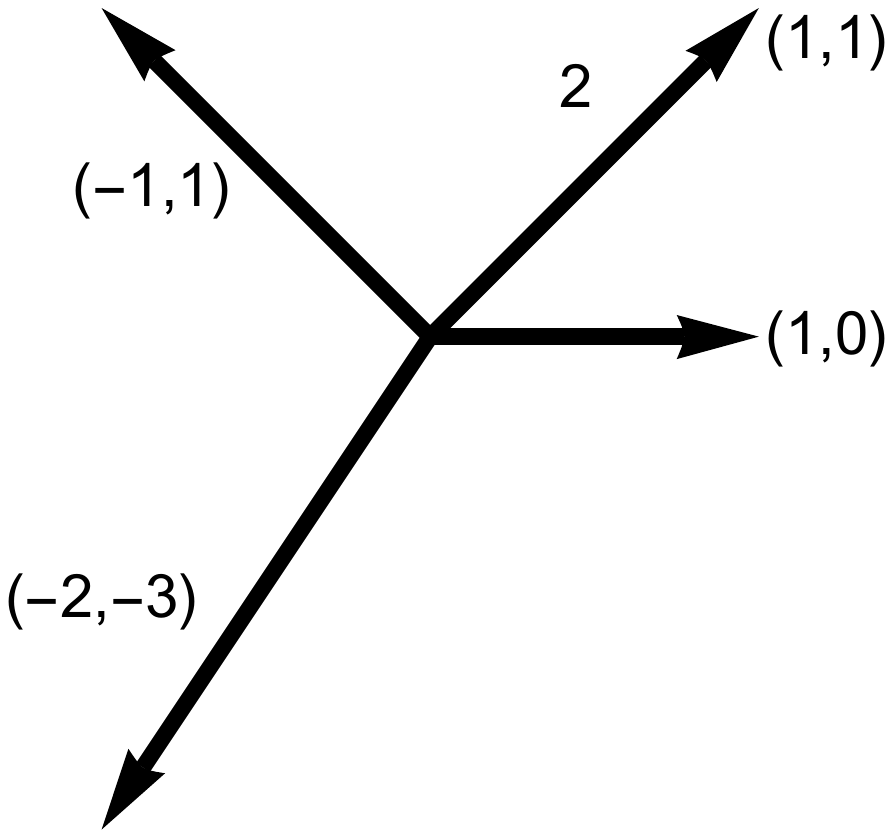}
\end{center}
\caption{Affine patches $C$ and $\widehat{C}$ of the quartic curve in Example~\ref{ex:IllusEx} and $\TropC(C)$.}\label{fig:patches}
\end{figure}
\end{center}

\begin{Thm}\label{thm:TropC}
Algorithm~\ref{alg:TropC} computes the primitive vectors with multiplicities of $\TropC(C)$.
\end{Thm}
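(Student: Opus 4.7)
The plan is to use Proposition~\ref{prop:CtoCHat} to reduce the theorem: since the map $(w_0,\ldots,w_n)\mapsto(w_1-w_0,\ldots,w_n-w_0)$ applied in line 13 preserves both ray structure and multiplicities, it suffices to show that the inner loop of Algorithm~\ref{alg:TropC} enumerates the rays of $\TropC(\widehat{C})\cap(\R_{\leq 0})^{n+1}$ and assigns weights that, once grouped by ray direction, sum to the tropical multiplicity of each ray.

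For the ``every ray is found'' direction of set-wise correctness, fix a nonzero rational $w\in\TropC(\widehat{C})\cap(\Q_{\leq 0})^{n+1}$ and let $j$ be the smallest index with $w_j<0$. The fundamental theorem of tropical geometry produces a convergent Puiseux series $y\in\V_{\C\ps_{conv}}(I(\widehat{C}))$ with $-\val(y)=w$, so $\val(y_i)=0$ for $i<j$ and $\val(y_j)>0$. A monomial change of uniformizer $t\mapsto\alpha\,t^{1/\val(y_j)}$ (for a suitable scalar $\alpha$) replaces $y$ with a convergent Puiseux series of the same ray direction whose $j$th coordinate equals $\tau_j t$, where $\tau_j$ is the output of Algorithm~\ref{alg:EOZ}. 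Proposition~\ref{Prop:Puiseux2Path} then certifies that $p:=y(1)$ is a valid input to Algorithm~\ref{alg:trop_rays}, and since $y(0)$ has its first zero coordinate at position $j$, we have $y(0)\in\Lambda_j$ and $p\in\Lambda_j^\tau$. By Proposition~\ref{Prop:Path2Puiseux}, the primitive vector $r$ returned satisfies $\val(y)=(1/r_j)r$, so the computed direction $(r_0-r_1,\ldots,r_0-r_n)$ spans the correct ray. Conversely, every $p\in\Lambda_j^\tau$ produces via Proposition~\ref{Prop:Path2Puiseux} a Puiseux series with nonnegative valuation and thus a ray of $\TropC(\widehat{C})\cap(\R_{\leq 0})^{n+1}$, and the partition $\Lambda=\sqcup_j\Lambda_j$ assigns each ray to a unique iteration of the outer loop---namely the smallest index of a zero coordinate of its limit point---so no ray is double-counted.

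The main obstacle is verifying the multiplicity bookkeeping. For a primitive Puiseux branch of $\widehat{C}$ through a point of $\Lambda_j$ with primitive valuation vector $r\in\Z_{\geq 0}^{n+1}$, parametrized in a minimal uniformizer $T$ as $z_i(T)=a_iT^{r_i}+O(T^{r_i+1})$, the constraint $z_j(T)=\tau_j t$ has exactly $r_j$ distinct solutions $T(t)$ related by $r_j$th roots of unity, producing $r_j$ distinct paths in $\Lambda_j^\tau$ emanating from this single branch, each with cycle number $c=r_j$ in the sense of Algorithm~\ref{alg:trop_rays}. Each such path is weighted $1/r_j$ in line 13, so a single primitive branch contributes total weight $1$. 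By \cite[\S 3.4]{MSbook}, the multiplicity of a ray $w\in\TropC(\widehat{C})$ equals the sum over irreducible components of $\V_{\C^*}(\In_w(I(\widehat{C})))$ of their multiplicities in $\In_w(I(\widehat{C}))$; identifying each primitive branch with the leading-coefficient point it contributes to the appropriate component (with branches that share a leading-coefficient direction stacking additively in that component's multiplicity), this count equals the total number of primitive branches with ray direction $w$. After grouping algorithm outputs by ray direction, the accumulated weights therefore recover the primitive tropical multiplicities exactly.
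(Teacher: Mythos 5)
Your overall architecture matches the paper's: reduce to $\widehat{C}$ via Proposition~\ref{prop:CtoCHat}, and use Propositions~\ref{Prop:Path2Puiseux} and~\ref{Prop:Puiseux2Path} to pass back and forth between tracked paths and convergent Puiseux series. The set-wise argument (every ray is found, and nothing extraneous is produced) is fine. The problem is the multiplicity bookkeeping, and it is a genuine error rather than a stylistic difference.

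You assert that a primitive branch ``with primitive valuation vector $r$'' has a minimal parametrization $z_i(T) = a_i T^{r_i} + O(T^{r_i+1})$, produces $r_j$ paths, and contributes total weight $1$ after the $1/r_j$ weighting of line 13. This conflates two different vectors. A minimal (injective) parametrization of a branch has coordinate valuations $d = (d_0,\dots,d_n)$, and it is $d$, not the primitive vector $r = d/\gcd(d)$, that controls how many paths arise and what the cycle number is: the constraint $z_j(T) = \tau_j t$ has $d_j$ solutions, giving $d_j$ paths of cycle number $d_j$. Meanwhile line 13 weights each path by $1/r_j = \gcd(d)/d_j$, so the total contribution of the branch is $d_j \cdot \gcd(d)/d_j = \gcd(d)$, not $1$. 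Your claim that the number of branches equals the tropical multiplicity fails as well. The paper's own Example~\ref{ex:butterfly2} is a counterexample to both claims: the cusp of $x^4+y^4-(x-y)^2(x+y)$ is a single primitive branch with minimal parametrization $(1, T^2, T^2 + T^3)$, so $d=(0,2,2)$, $\gcd(d)=2$, $r=(0,1,1)$; it produces two paths each weighted $1/r_1 = 1$, for a total contribution of~$2$, matching the multiplicity $2$ of the minimal prime $\langle x - y\rangle$ in $\In_w(f) = -(x-y)^2(x+y)$. Under your accounting this branch would contribute~$1$, which is wrong. Your appeal to \cite[\S 3.4]{MSbook} to equate ``number of primitive branches'' with the sum of component multiplicities of $\In_w(I)$ is the missing link, and it simply is not true as stated.

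The paper sidesteps the branch-counting entirely. It intersects $\widehat{I}$ with $\langle x_j - \tau_j t\rangle$, uses the transversality of the ray $\vec{w}$ with the tropical hyperplane together with \cite[Def.~3.6.11]{MSbook} to conclude that the multiplicity of the isolated tropical point $|1/w_j|\,w$ is $m\,|w_j|$, and then uses \cite[Prop.~3.4.8]{MSbook} to identify that multiplicity with the literal cardinality of $V_w$, the set of Puiseux solutions with that valuation. Since each element of $V_w$ yields one tracked path weighted $1/r_j = 1/|w_j|$, the total is $m$. This converts a delicate local-geometry question (how branches and their $\gcd$-weights aggregate into tropical multiplicity) into a clean point count, which is exactly the step your proposal is missing. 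If you want to salvage your branch-level argument, you would need to prove that a branch contributes $\gcd(d)$ to the tropical multiplicity and that these contributions exhaust the multiplicity of the ray; but at that point you would be re-deriving \cite[Prop.~3.4.8]{MSbook}, so it is better to cite it directly as the paper does.
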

\begin{proof}
Let $\mathcal{T}$ be the set of vectors with multiplicities computed by Algorithm~\ref{alg:TropC}.
Suppose that $u$ is a primitive vector of $\TropC(C)$ with multiplicity $m$. 
By Proposition~\ref{prop:CtoCHat}, we know 
$w = (0,u) - \max\{0, u_1,\hdots,u_n\}\1$ is a primitive
vector of $\TropC(\widehat{C})$ with multiplicity $m$. 
Let~$j$ be the first index for which $w_j<0$ and let $\tau_j$ be the output of Algorithm~\ref{alg:EOZ} with $X=\widehat{C}$. 
The ray $\vec{w}$ and tropical hyperplane $\TropC(\langle x_j - \tau_j t\rangle)$ meet transversely 
at the point $ |1/w_j |w$. By \cite[Def.~3.6.11]{MSbook}, the multiplicity of $|1/w_j |w$ in $\TropC(\widehat{I} + \langle x_j - \tau_j t\rangle)$
equals $m\cdot |w_j|$.

Let $K = \C\ps_{conv}$. By \cite[Prop.~3.4.8]{MSbook}, the multiplicity of $|1/w_j |w$ in 
the tropical variety of $\widehat{I} + \langle x_j - \tau_j t\rangle$ equals the number of  
points in its variety over $K$ with this valuation,
\[V_w  \ \ = \ \  \left.\left\{y\in \V_K(\widehat{I} + \langle x_j - \tau_j t\rangle ) ~\right|~| \val(y)  =   -|1/w_j |w\right\}.\]
By Proposition~\ref{Prop:Puiseux2Path}, any $y\in V_{w}$ converges for $t\in [0,1]$ 
and $p=y(1)$ defines a viable 
input for Algorithm~\ref{alg:trop_rays}.
Since $y_j = \tau_j t$, we see that $p\in C^{\tau}_j = \widehat{C}\cap \V(x_j - \tau_j)$.  Furthermore, since 
$j$ is the smallest index for which $\val(y_j)>0$, $j$ is the smallest zero coordinate 
of $y(0)$. Therefore $y(0)$ belongs to $\Lambda_j$ and $p$ belongs to $\Lambda_j^{\tau}$.
By Proposition~\ref{Prop:Path2Puiseux}, Algorithm~\ref{alg:trop_rays} returns primitive vector $r$, where 
$(1/r_j)r = -|1/w_j |w$ is the valuation of $y$.  Thus, each point in $V_w$ contributes 
the vector $u = (r_0-r_1, \hdots, r_0-r_n)$ to $\mathcal{T}$ with multiplicity $1/r_j = 1/|w_j|$.  
As $\#V_w = m\cdot |w_j|$, the 
vector $u$ appears in $\mathcal{T}$ with multiplicity at least $m$. 
By Proposition~\ref{Prop:Path2Puiseux}, every path used in Algorithm~\ref{alg:trop_rays} 
comes from some tuple of Puiseux series, giving us equality. 
\end{proof}

\begin{Example}\label{ex:butterfly2} The curve from Example~\ref{ex:butterfly} displayed in Figure~\ref{fig:butterfly} demonstrates some of the subtlety in 
computing the multiplicities of rays in the tropical variety. 
The cusp at the origin contributes to the ray $(-1,-1)$ in $\TropC(f)$ with multiplicity $2$.

There are two points in $\V(f)\cap\{x=\tau_j\}$ that track to the origin along this cusp, and each of these paths has cycle number $c=2$. 
Re-parametrizing by $x=t^2$ and using Cauchy's integral formula, we find the initial terms of the two corresponding Puiseux series in $\V_{\R\ps}(f)$:
\[ ( x,y) \ \ = \ \  \left( t^2, \ \ t^2  \ \pm \ t^3\ +\ \frac{3 }{4}\ t^4\ \pm\ \frac{45 }{32}\ t^5   \ + 
\ \hdots \right), \]
each with $u = \val(1,x,y) = (0,2,2)$. Thus, $g =\gcd(u) = 2$. 
For each of these two paths, Algorithm~\ref{alg:trop_rays} returns primitive vector $r= u/g = (0,1,1)$. 
In Algorithm~\ref{alg:TropC}, each of these paths contributes $(-1,-1)$, each with multiplicity 1, thereby yielding a total contribution of~$2$.
\end{Example}

\begin{Remark}
We highlight some of the key differences between Algorithm~\ref{alg:TropC}
and the approach presented \cite{jensen2014computing}.  
First, Algorithm~\ref{alg:TropC} explicitly computes the endgame operating zone 
to ensure all computations are performed on 
convergent Puiseux series.  The approach of 
\cite{jensen2014computing} requires slicing so that the intersection
points are within a so-called tentacle.  Then, they track these
to compute additional points in the same tentacle.
By using lattice recovery techniques, they heuristically recover the valuation.  
Although our implementation as described in 
Section~\ref{sec:implementation}
performs computations which are not certified, 
these computations are amendable to certification.  
In our approach, inside the endgame operating
zone, we first compute the cycle number.
Since this can be performed using a Newton homotopy,
this computation can be certifiably computed using \cite{CertifiedTrack1,CertifiedTrack2}.
Then, Cauchy's integral theorem is used to compute the valuation.  
When the Cauchy integral cannot be computed exactly, the use of the trapezoid rule
produces an exponentially convergent numerical 
method \cite{trefethen2014exponentially} for computing the Cauchy integral.  

The techniques of \cite{jensen2014computing} could possibly be adapted to compute real tropical curves, but 
that is not pursued by the authors.   Algorithm~\ref{alg:TropC} can be
transformed into a method for computing the real tropical
curve, as described below.
\end{Remark}

%
%

\subsection{Computing real tropical curves}
\label{sec:real_tropical_algorithm}

Computing the real tropical curves is similar
to that of complex tropical curves except that only paths 
starting at real points are considered, as presented in Algorithm~\ref{alg:TropR}. 

\noindent \begin{algorithm}[!hb]
\SetKwInOut{Input}{Input}\SetKwInOut{Output}{Output}

\Input{A real irreducible curve $C\subset(\C^*)^{n}$ 
with $C\cap(\R^*)^{n}\neq\emptyset$; a polynomial 
system $f$ such that $C$ is an irreducible
component of multiplicity one of the solution set $f = 0$;
and a vector $v\in (\R^*)^{n+1}$ satisfying condition \eqref{eq:patch}.}
\Output{$\TropR(C)$, a collection of primitive vectors in $\Z^n$ with signs $\{\pm 1\}^n$.}
\BlankLine	
\nl Define $\widehat{C} = \overline{C}\cap\V(v\cdot x -1)$ and $\widehat{f} = \overline{f}\cup\{v\cdot x-1\}$ \;
\nl Set $\Lambda =\widehat{C}\cap\V(x_0\cdot x_1\cdots x_n)$\;
\nl \If {$\Lambda = \emptyset$}
{
  \nl \Return $\TropR(C) = \{0\}$ \;
}
\nl Initialize $\TropR(C) = \emptyset$ \; 
\nl Partition $\Lambda = \sqcup_j \, \Lambda_j$, where $\Lambda_j = \{ x \in \Lambda~|~x_0,\ldots,x_{j-1} \neq 0, x_j = 0 \}$ \;
\nl \For {$j$ from 0 to $n$} 
{
	\nl \If {$\Lambda_j = \emptyset $}
	{
	\nl Continue \; 
	}
	\nl Call Algorithm~\ref{alg:EOZ} using $\widehat{C}$, $\widehat{f}$, $\Lambda$, and $j$ yielding $\tau_j > 0$ \;
	\nl Compute $C_j^+ = \widehat{C} \cap \V(x_j - \tau_j) \cap \R^{n+1}$ and $C_j^- = \widehat{C} \cap \V(x_j + \tau_j) \cap \R^{n+1}$ \;
	\nl \nosemic Compute $\Lambda_j^{+}$ which consists of $p\in C_j^+$ such that the path starting with $s = 1$\;
	\pushline \dosemic at~$p$ defined by $\widehat{C}\cap\V(x_j-\tau_j s)$ for $s\in[0,1]$ ends at a point in $\Lambda_j$ \;
	\popline \nl \For {each $p \in \Lambda_j^+$}
	{
		\nl \dosemic Call Algorithm~\ref{alg:trop_rays} using $\widehat{C}$, $j$, $\tau_j$, and $p$ yielding $r\in (\Z_{\geq 0})^{n+1}$\;
		\nl \nosemic Add $(r_0-r_1,\dots,r_0-r_n)$ to $\TropR(C)$ with sign $ (\sign(p_0p_1),\dots,\sign(p_0p_n))$ \;
	}	
	\nl \nosemic Compute $\Lambda_j^{-}$ which consists of $p\in C_j^-$ such that the path starting with $s = 1$\;
	\pushline \dosemic at~$p$ defined by $\widehat{C}\cap\V(x_j+\tau_js)$ for $s\in[0,1]$ ends at a point in $\Lambda_j$ \;
	\popline \nl \For {each $p \in \Lambda_j^-$}
	{
		\nl Call Algorithm~\ref{alg:trop_rays} using $\widehat{C}$, $j$, $-\tau_j$, and $p$ yielding  $r\in (\Z_{\geq 0})^{n+1}$ \;
	\nl \nosemic Add $(r_0-r_1,\dots,r_0-r_n)$ to $\TropR(C)$ with sign $(\sign(p_0p_1),\dots,\sign(p_0p_n))$ \;
	}
}
\nl \Return $\TropR(C)$ \; 
\BlankLine 
\caption{Computation of $\TropR$.}\label{alg:TropR}
\end{algorithm}

Before proving that Algorithm~\ref{alg:TropR} computes the real
tropical curve of $C$, we consider an illustrative example that demonstrates
the steps of the algorithm.

\begin{Example}\label{ex:IllusExReal}
Consider the quartic curve $C$ of Example~\ref{ex:IllusEx}. The computation of $\TropR(C)$ 
largely follows that of $\TropC(C)$.  We now must check both sides of the coordinate hyperplanes $\V(x_j)$ for real points.  
For example, intersecting  $\widehat{C}$ with $\V(x_1-0.1)$ gives two points that limited to the point $(1,0,0)$.
However there are no real points in $\widehat{C} \cap \V(x_1+0.1)$ that limit to $(1,0,0)$, 
so $C_1^{-}$ is empty. Indeed, tracking the point $p\approx(0.8293, 0.1, 0.0354)\in \Lambda_1^{\tau}$ along the path $\widehat{C}\cap \V(x_1-0.1 e^{i\theta})$
for $\theta\in [0,\pi]$ yields a complex point in $\widehat{C}\cap  \V(x_1+0.1 )$. 
As in Example~\ref{ex:IllusEx}, applying Algorithm~2 to the point $p$ 
gives ray $r = (0,2,3)$.  Every coordinate of $p$ is positive, so we add the ray $(r_0-r_1,r_0-r_2) = (-2,-3)$ to $\TropR(I)$ with sign vector $(+,+)$. 
The other point point $p\approx(0.9635, 0.1, -0.0317)\in \Lambda_1^{\tau}$ gives  $(-2,-3)\in \TropR(I)$ with sign vector $(+,-)$. Repeating this process for the points in $\Lambda_0^{\tau}$,
gives the signed real tropical variety $\TropR(C)$  in Figure~\ref{fig:patchesReal}. 
\end{Example}

\begin{figure}[!ht]
\begin{center}
\includegraphics[height=1.8in]{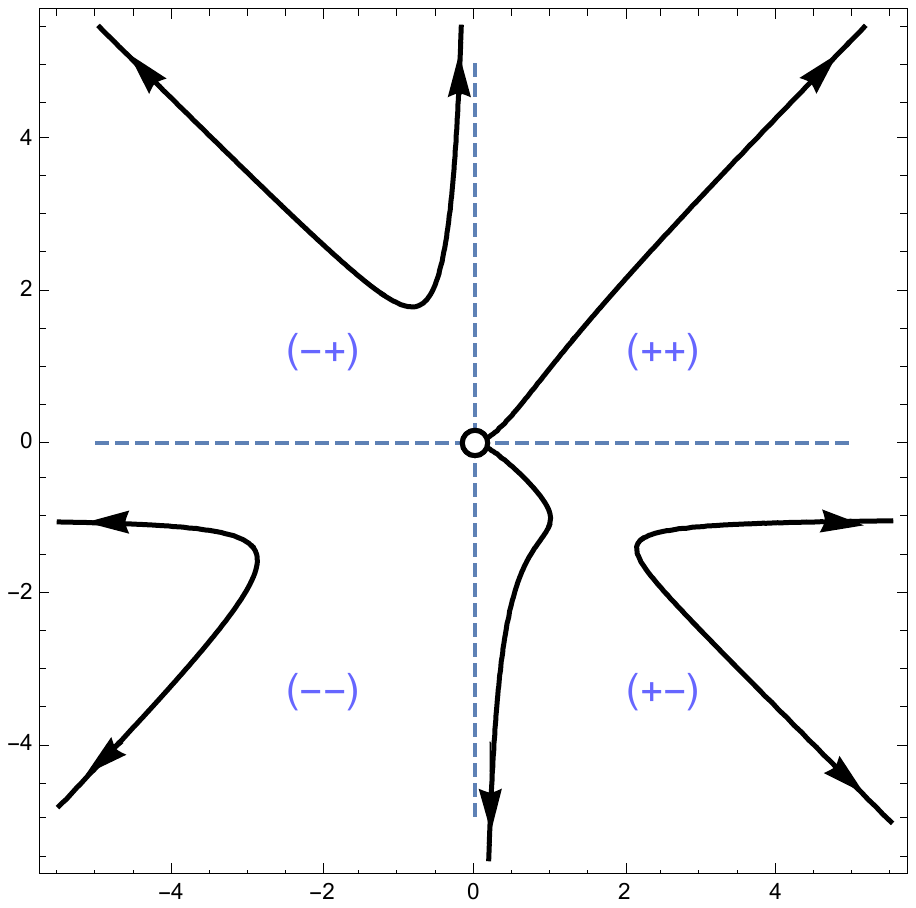} \quad \quad 
\includegraphics[height=1.8in]{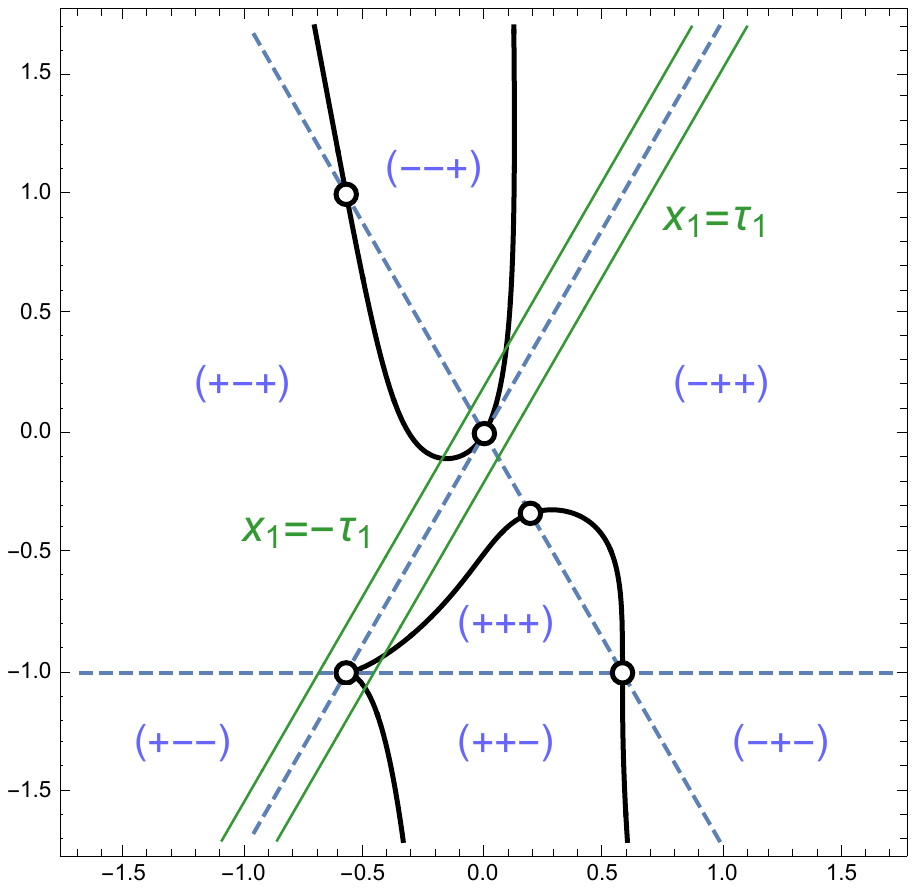}\quad \quad 
\includegraphics[height=1.8in]{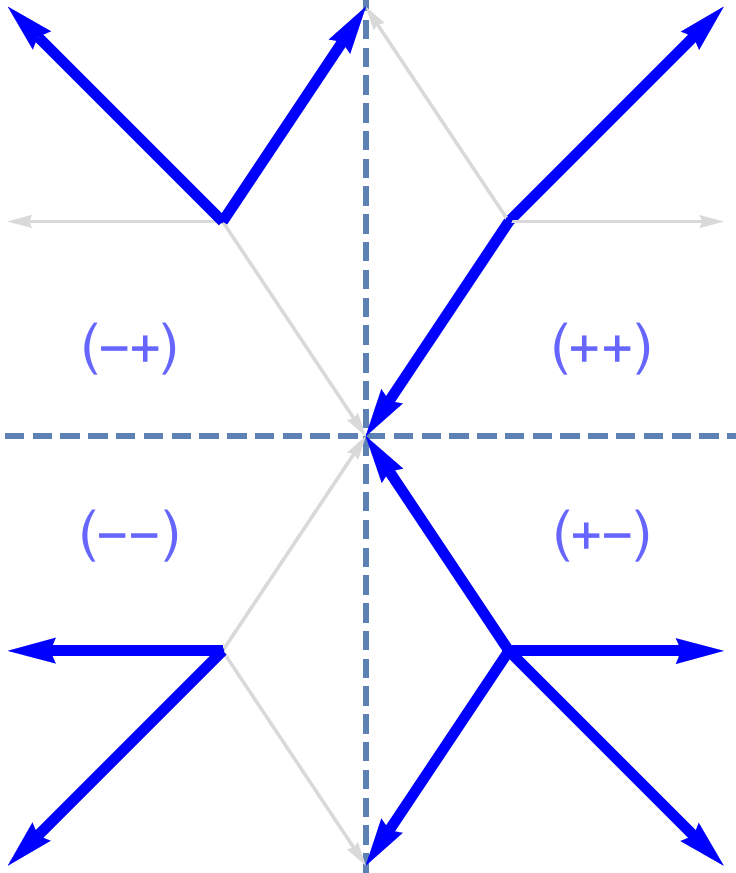}
\end{center}
\caption{Affine curves $C$ and $\widehat{C}$ of the real quartic in Example~\ref{ex:IllusExReal} and the signed real tropical variety $\TropR(C)$ 
contained in reflected copies of $\TropC(C)$.}\label{fig:patchesReal}
\end{figure}

\begin{Thm}
Algorithm~\ref{alg:TropR} computes the primitive vectors with signs of $\TropR(C)$.
\end{Thm}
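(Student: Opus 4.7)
The plan is to parallel the proof of Theorem~\ref{thm:TropC}, exchanging multiplicity bookkeeping for sign bookkeeping and restricting to real Puiseux series throughout. First I would establish a signed analog of Proposition~\ref{prop:CtoCHat}: because $v\in (\R^*)^{n+1}$, a primitive signed ray $(u,\epsilon)\in {\rm sTrop}_\R(C)$ lifts to a primitive signed ray $(w,\delta)\in {\rm sTrop}_\R(\widehat C)\cap ((\R_{\leq 0})^{n+1}\times\{\pm\}^{n+1})$ via $w = (0,u) - \max\{0,u_1,\hdots,u_n\}\1$ and $\epsilon_k = \delta_0\delta_k$. The proof mimics that of Proposition~\ref{prop:CtoCHat}, using that for $y\in \V_{\R\ps}(\widehat I)\cap (\R\ps^*)^{n+1}$ the projected point $(y_1/y_0,\hdots,y_n/y_0)$ has $k$th coefficient sign $\sign(\coeff(y_0))\sign(\coeff(y_k))$. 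The branch $\Lambda=\emptyset$ of Algorithm~\ref{alg:TropR} is justified separately: in that case $\widehat C$ misses every coordinate hyperplane, forcing $\TropR(C)=\{0\}$.

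Fix a primitive signed ray $(w,\delta)$ of ${\rm sTrop}_\R(\widehat C)$ with $w\in (\Z_{\leq 0})^{n+1}$, let $j$ be the first index with $w_j<0$, and let $\tau_j>0$ be the output of Algorithm~\ref{alg:EOZ}. By definition there exists $y\in \V_{\R\ps_{conv}}(\widehat I)$ with $-{\rm sval}(y)=(w,\delta)$. Rescaling the uniformizer $t$ by a positive constant, I may arrange $y_j=\delta_j\tau_j t$, whence $y\in \V(\widehat I + \langle x_j - \delta_j\tau_j t\rangle)$ and $\val(y)\in (\Q_{\geq 0})^{n+1}$. Proposition~\ref{Prop:Puiseux2Path}, applied with $\tau=\delta_j\tau_j$, shows that $y$ converges on $[0,1]$ and that $p:=y(1)$ is real with $p_j=\delta_j\tau_j$; the choice of $j$ forces $y(0)\in \Lambda_j$, so $p\in \Lambda_j^{\delta_j}$ and is actually processed by the algorithm. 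Proposition~\ref{Prop:Path2Puiseux} then guarantees that the primitive vector $r$ returned by Algorithm~\ref{alg:trop_rays} satisfies $(r_0-r_1,\hdots,r_0-r_n)=u$, exactly as in the complex case.

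The remaining delicate step is to verify the sign decoration $\sign(p_0 p_k)=\delta_0\delta_k=\epsilon_k$, which reduces to showing $\sign(p_k)=\sign(\coeff(y_k))=\delta_k$ for each coordinate $k$. The leading-term estimate $y_k(t)\sim \coeff(y_k)\,t^{\val(y_k)}$ gives the correct sign for sufficiently small $t>0$, so the only worry is that the real continuous function $s\mapsto y_k(s)$ flips sign somewhere in $(0,1]$. This is the main obstacle, and it is precisely what the tight choice of $\tau_j<\min(T_j^*)$ in Algorithm~\ref{alg:EOZ} is engineered to rule out: if $y_k(s^*)=0$ for some $s^*\in(0,1]$, then $y(s^*)\in \widehat C\cap\V(x_0\cdots x_n)=\Lambda$, so $\tau_j s^*=|y_j(s^*)|$ would be a nonzero element of $|\pi_j(\Lambda)|\subseteq T_j^*$ satisfying $\tau_j s^*\leq \tau_j<\min(T_j^*)$, a contradiction. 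Hence $y_k$ is nonvanishing on $(0,1]$, its sign there is the constant $\delta_k$, and in particular $\sign(p_k)=\delta_k$.

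For the converse inclusion, every point $p\in \Lambda_j^{\pm}$ processed by the algorithm is the time-$1$ value of a real path $P(s)\subset \widehat C\cap \V(x_j\mp \tau_j s)$, which by Proposition~\ref{Prop:Path2Puiseux} arises from a real Puiseux series $y\in \V_{\R\ps_{conv}}(\widehat I)$; the same leading-term argument shows that $\sign(p_k)=\sign(\coeff(y_k))$. The signed valuation of $y$ projects under the signed analog of Proposition~\ref{prop:CtoCHat} to the pair $\bigl((r_0-r_1,\hdots,r_0-r_n),(\sign(p_0p_1),\hdots,\sign(p_0p_n))\bigr)$ recorded by the algorithm, certifying that this output belongs to $\TropR(C)$.
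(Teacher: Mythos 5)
Your proposal is correct and follows essentially the same route as the paper's proof: lift a signed ray $(u,\sigma)$ of $\TropR(C)$ to a signed ray $(w,(\lambda,\lambda\sigma))$ of $\TropR(\widehat C)$ via Proposition~\ref{prop:CtoCHat}, use Propositions~\ref{Prop:Puiseux2Path} and~\ref{Prop:Path2Puiseux} to pass between real convergent Puiseux series and the real paths that Algorithm~\ref{alg:TropR} actually tracks, and establish the sign claim $\sign(p_k)=\sign(\coeff(y_k))$ by showing $y_k$ cannot vanish on $(0,1]$ because $\tau_j$ is chosen strictly below $\min(T_j^*)\leq\abs(\pi_j(\Lambda))$ in Algorithm~\ref{alg:EOZ}. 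The only cosmetic difference is that you separately dispatch the $\Lambda=\emptyset$ branch of the algorithm, which the paper leaves implicit.
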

\begin{proof}
Let $\mathcal{T}$ be the set of vectors with signs computed by Algorithm~\ref{alg:TropR}.
Suppose that $u$ is a primitive vector of $\TropR(C)$ with sign $\sigma \in \{\pm1\}^n$.
By Proposition~\ref{prop:CtoCHat}, we have that 
$w = (0,u) - \max\{0, u_1,\hdots,u_n\}\1$ is a primitive
vector of $\TropR(\widehat{C})$.  The proof of Proposition~\ref{prop:CtoCHat} shows that
we can take $w$ with sign $(\lambda, \lambda \sigma)$ for some $\lambda\in \{\pm 1\}$. 
Let $j$ be the first index for which $w_j<0$ and let $\tau_j$ be the output of Algorithm~\ref{alg:EOZ} with $X=\widehat{C}$. 

Let $K = \R\ps_{conv}$.  Then there is some point $y\in V_K(\widehat{I} + \langle x_j\pm \tau_j t\rangle )$ with valuation $-|1/w_j|w$
and sign $(\lambda, \lambda \sigma)$.  In particular, $y_j = \lambda \sigma_j \cdot \tau_j t$.
By Proposition~\ref{Prop:Puiseux2Path}, $y$ converges for $t\in [0,1]$ 
and $p=y(1)$ and $\tau =  \lambda \sigma_j \cdot \tau_j$  defines a viable input for Algorithm~\ref{alg:trop_rays}.
As in the proof of Theorem~\ref{thm:TropC}, $p$ belongs to $C^{+}_j$ and $\Lambda^{+}_j$ if  $\lambda \sigma_j=1$, and 
$p$ belongs to $C^{-}_j$ and $\Lambda^{-}_j$ if  $\lambda \sigma_j=1$. 
By Proposition~\ref{Prop:Path2Puiseux}, Algorithm~\ref{alg:trop_rays} returns primitive vector $r$, where 
$(1/r_j)r = -|1/w_j |w$ is the valuation of $y$.  Thus the vector $u = (r_0-r_1, \hdots, r_0-r_n)$ is added to~$\mathcal{T}$ with
sign vector ${\rm sign}(p_0p_1, \hdots, p_0p_n)$.  To see that ${\rm sign}(p_0p_1, \hdots, p_0p_n) ={\rm sign}(y_0y_1, \hdots, y_0y_n) =\sigma$, 
note that for all $s\in (0,1]$ and any $k =0,\hdots, n$, the coordinate function $y_k(s)$ is non-zero and therefore has constant sign. 
If not, then $y_k(s) = 0$ for some $0<s\leq1$, then $y(s)\in\Lambda$ and $\tau_j s= |y_j(s)| \in \abs(\pi_j(\Lambda))$, which contradicts 
our choice of $\tau_j<  \abs(\pi_j(\Lambda))$
computed by Algorithm~\ref{alg:EOZ}.  Therefore $\TropR(C)$ is contained in $\mathcal{T}$. 
By Proposition~\ref{Prop:Path2Puiseux}, every path used in Algorithm~\ref{alg:trop_rays} 
comes from some tuple of Puiseux series yielding equality. 
\end{proof}

%
%

\section{Implementation}
\label{sec:implementation}

We have implemented the algorithms from Section~\ref{sec:Algorithms} using
a combination of Matlab and {\tt Bertini} \cite{BHSW06},
which is available at \cite{BertiniTropical}.
This section briefly summarizes the key steps with respect to implementing
the algorithms in Section~\ref{sec:Algorithms}.

\subsection{Witness sets}

The input, namely an irreducible curve $C\subset(\C^*)^n$, 
is represented by a {\em witness set} (see, e.g.,
\cite[Chap.~13]{SW05} for more details).
A witness set for the curve $C$ is a triple $\{f,\mathcal{L},W\}$ where:
\begin{enumerate}
\item $f$ is a polynomial system such that $C$ is an irreducible component of the zero set of $f$,
\item $\mathcal{H}$ is a general hyperplane in $\C^n$, and
\item $W = C\cap \mathcal{H}$.
\end{enumerate}
In this context, a general hyperplane $\mathcal{H}$ 
is a hyperplane that intersects $C$ transversely, 
i.e.,~\mbox{$C\cap \mathcal{H}\subset(\C^*)^n$}
consists of $\deg(C)$ distinct points.

The use of witness sets allows for the restriction of computations
to the irreducible curve inside the zero set of $f$ of interest.  
Moreover, one can easily produce a witness set for $\widehat{C}\subset\C^{n+1}$
as described in Section~\ref{sec:patching} from a witness set for $C$.

If $C$ has multiplicity $> 1$ with respect to $f$, then one can 
utilize deflation techniques for positive-dimensional
components which do not add auxiliary variables, \mbox{e.g.,~\cite{Deflation,Isosing}}, 
to reduce to the multiplicity $1$ case.

\subsection{Implementation of Algorithm~\ref{alg:EOZ}}
\label{sec:impl_EOZ}

The key to Algorithm~\ref{alg:EOZ} is in the computation of the set $S$
consisting of critical points.  Following the notation of Section~\ref{sec:EOZ},
we first compute the set $R\subset X\times\P^{n-1}$
which solves
\begin{equation}
\begin{bmatrix}
f(x) \\ Jf(x)_{\widehat{j}}\cdot \nu \end{bmatrix}
=0 \label{eqn:detjac_impl}
\end{equation}
Recall that $Jf(x)_{\widehat{j}}$ denotes the Jacobian matrix of $f$
evaluated at $x$ with the $j^{\rm th}$ column removed.
In particular, since $X$ has multiplicity $1$ with respect to $f$, 
the set $S = \pi(R)$ is a finite set where $\pi(x,\nu) = x$.

By starting with a witness set for $X$, 
regeneration \cite{hauenstein2011regeneration,RegenExtension}
can be used to compute $R$.  Moreover,
the returned value of Algorithm~\ref{alg:EOZ}
in our implementation is $\tau_j = \min(T_j^\ast)/2$ when $T_j^\ast \neq \emptyset$ and
$\tau_j = 1/10$ otherwise.

\subsection{Implementation of Algorithm~\ref{alg:trop_rays}}
\label{sec:impl_val_mult}

The two key steps in Algorithm~\ref{alg:trop_rays} 
are computing the cycle number and the Cauchy integral.

Once inside the endgame operating zone, the cycle number the number of loops around $x_j=0$ (parametrized by $x_j = \tau e^{i\theta}$)
necessary to return to the starting point $p$.  The cycle number is therefore at most the degree of curve.
By using loops based on regular polygons,
this results in tracking using a sequence of so-called 
Newton homotopies, in which only the 
value of $x_j$ depends explicitly on the path tracking parameter.  
These computations can be performed certifiably \cite{CertifiedTrack1,CertifiedTrack2}.
By using regular polygons, the data from this computation
is reused for the computation of the Cauchy integral which is described next.

By uniformizing via the cycle number, we reduce to power series computations
and the coefficients are computed using Cauchy integrals
where the integrand is periodic with period~$2\pi$.  
Hence, the trapezoidal rule, which is computed using
the data from the regular polygonal paths described above, 
is exponentially convergent \cite{trefethen2014exponentially}.
Due to the exponential convergence, the challenge of deciding
if the integral is zero or nonzero is greatly reduced.  
That is, the exponential convergence allows one to validate this decision by recomputing using 
the trapezoid rule with more sample points computed more accurately using
higher~precision~arithmetic.

\subsection{Implementation of Algorithm~\ref{alg:TropC}}

The computations performed in Algorithm~\ref{alg:TropC} reduce
to tracking solution paths as the hyperplane $\mathcal{H}$
in the witness set for $\widehat{C}$ is deformed.  
To calculate $\widehat{C}\cap\V(x_0\cdots x_n)$, 
we actually compute $\widehat{C}\cap\V(x_j)$ for $j=0,\hdots, n$ and take their union. 
Each of these is obtained by tracking the solution paths defined
by $\widehat{C}\cap(t\cdot\mathcal{H} + (1-t)\cdot\V(x_j))$
from $t = 1$ to $t = 0$.  The computations in Step 10 and Step 11 
follow similarly.

\subsection{Implementation of Algorithm~\ref{alg:TropR}}

One key difference between Algorithm~\ref{alg:TropC} and
Algorithm~\ref{alg:TropR} is that 
only {\em real} points are retained in Algorithm~\ref{alg:TropR}.
In our implementation, the determination of reality is based
on a user-determined numerical threshold.  By recomputing
the points more accurately, the imaginary parts should
limit to $0$ at a commensurate rate.   One could also 
certify reality by using \cite{hauenstein2012algorithm}.

\subsection{Incomplete intersections}

When the polynomial system $f$ in a witness set for the curve $C\subset(\C^*)^n$
consists of more than $n-1$ polynomials, the standard approach in numerical
algebraic geometry is to replace~$f$ by a randomization of the form $A\cdot f$.
For example, the twisted cubic curve $C\subset\C^3$ is defined by
the polynomial system $f=\{y-x^2, z-x y, y^2-xz\}$. 
The zeros set of the sufficiently general 
randomization, say $g = \{y-x^2 + 2(y^2 - xz), z-xy + 3(y^2-xz)\}$,
consists of $C$ and a line.  In particular, one virtue of 
working with witness sets is that one can replace $f$ by $g$ in a
witness set for $C$.  Our implementation relies upon the user
to provide a randomization. 

\subsection{Computational challenges}

In our experience, the majority of the computational time in computing
$\TropC$ and $\TropR$ using Algorithm~\ref{alg:TropC} and Algorithm~\ref{alg:TropR}, respectively, is in the computation of $\tau_j$ via Algorithm~\ref{alg:EOZ}.

Another issue is that endpoints that lie on a coordinate axis are frequently singular, as noted in \cite{jensen2014computing}.  By using endgame methods, such as 
Cauchy's endgame \cite{Cauchy},
with adaptive precision computations \cite{AMP}, 
one is able to accurately compute such endpoints.  This
can be computationally expensive due to the numerical ill-conditioning
near singular solutions.

%
%

\section{Non-planar examples}\label{sec:examples}

In this section, we compute the tropicalizations of real and complex curves in more than two dimensions. 
In our first example, we replicate the main example from \cite{jensen2014computing} and extend it to the real numbers.  
Our second example is the central curve of a linear program that formed part of the recent counterexample 
to the continuous Hirsch conjecture \cite{TropCentralPath}. 

\subsection{$A$-polynomial of a knot}\label{ex:knot}

First, we consider the real and complex tropical varieties of a curve 
whose image under a monomial map is the 
plane curve defined by the $A$-polynomial for the knot $8_1$. 
This curve is the main example of \cite{jensen2014computing} and 
is a component of the reducible variety defined by the ideal 
\begin{equation}
\begin{tabular}{c}
$I = \langle \ z_1+w_1-1, \ z_2+w_2-1, \ z_3+w_3-1, \ z_4+w_4-1, \ z_5+w_5-1,$\hspace{.8in} \smallskip \\
$~~~~~~w_2 w_4-z_2 z_4 w_1 w_5, \
z_2 z_4 z_5^2 w_1^2 - z_1^2 w_2 w_3 w_4 w_5, \
w_3^2-z_3^2 w_1, \
w_5^2-z_2 z_4 z_5^2\ \rangle$.
\end{tabular}
\label{eqn:knotsystem}
\end{equation}
The set $\V_{\C^*}(I)$ consists of a degree $22$ curve $C$ of interest.
We note that $\V_{\C}(I)$ also contains~$4$ additional curves, each
having degree $3$ and multiplicity greater than one.
Algorithms~\ref{alg:TropC} and~\ref{alg:TropR} compute the complex and real tropical varieties of $C$, consisting of eight and seven rays, respectively, as summarized below: \bigskip 

\noindent\begin{tabular} {c c rrrrrrrrrrrr}
\textnormal{complex } &\textnormal{real contribution to }  & \multicolumn{11}{c}{\ }  \\
\textnormal{multiplicity} &\textnormal{complex multiplicity} & \multicolumn{11}{c}{\hspace{.6in}\textnormal{primitive element of ray in $\Trop(I)$}} \smallskip \\
3 \!&\! 3 \!&\!$(\hphantom{-} 0, $\!&\!\!$ 1, $\!&\!$ 0, $\!&\!$ -1, $\!&\!$ 1, $\!&\!$ 0, $\!&\!$ 1, $\!&\!$ 0, $\!&\!$ 0, $\!&\!$ 1)$ \\
4 \!&\! 2 \!&\!$(-1, $\!&\!$ 1, $\!&\!$ 0, $\!&\!$ 1, $\!&\!$ -1, $\!&\!$ 0, $\!&\!$ 1, $\!&\!$ 0, $\!&\!$ 1, $\!&\!$ 0 )$ \\
3 \!&\! 3 \!&\!$(\hphantom{-}0, $\!&\!$ -1, $\!&\!$ 0, $\!&\!$ 1, $\!&\!$ 1, $\!&\!$ 0, $\!&\!$ 0, $\!&\!$ 0, $\!&\!$ 1, $\!&\!$ 1)$ \\
1 \!&\! 1 \!&\!$(\hphantom{-}0, $\!&\!$ 0, $\!&\!$ 0, $\!&\!$ -2, $\!&\!$ 0, $\!&\!$ -4, $\!&\!$ -7, $\!&\!$ -2, $\!&\!$ 0, $\!&\!$ -1)$\\
1 \!&\! 1 \!&\!$(\hphantom{-}0, $\!&\!$ -2, $\!&\!$ 0, $\!&\!$ 0, $\!&\!$ 0, $\!&\!$ -4, $\!&\!$ 0, $\!&\!$ -2, $\!&\!$ -7, $\!&\!$ -1)$\\
2 \!&\! 2 \!&\!$(\hphantom{-}2, $\!&\!$ -2, $\!&\!$ -1, $\!&\!$ 0, $\!&\!$ 0, $\!&\!$ 2, $\!&\!$ 0, $\!&\!$ 0, $\!&\!$ -1, $\!&\!$ -1)$ \\
2 \!&\! 2 \!&\!$(\hphantom{-}2, $\!&\!$ 0, $\!&\!$ -1, $\!&\!$ -2, $\!&\!$ 0, $\!&\!$ 2, $\!&\!$ -1, $\!&\!$ 0, $\!&\!$ 0, $\!&\!$ -1)$  \\
2 \!&\! 0 \!&\!$(-2, $\!&\!$ 1, $\!&\!$ 2, $\!&\!$ 1, $\!&\!$ -1, $\!&\!$ 0, $\!&\!$ 1, $\!&\!$ 2, $\!&\!$ 1, $\!&\!$ 0)$ 
\end{tabular} \bigskip 

The complex tropical variety replicates Example 4.1 in \cite{jensen2014computing}. 
Of the fourteen intersection points of the corresponding curve $\widehat{C}$ with the union of the 
coordinate hyperplanes, twelve are real.  
Two of these points have the form $(h,z,w) = (0, 0, -1, 0, -a, 0, 0, 1, 0, a, 0)$, where $h$ is the added homogenzing variable and $a$ satisfies
$a^4 - 2a^3 - 5a ^2 - 2a + 1=0$.
Both of these real points contribute one towards the 
multiplicity of the ray $\val(h,z,w) = (1,2,0,1,0,2,1,0,1,0,1)$, which 
corresponds to the second ray listed in the table above.
The two complex values of $a$ solving the quartic equation above 
also contribute one each to the complex multiplicity of this ray, 
giving a total complex multiplicity of four.

\subsection{The central curve of a linear program} \label{ex:central_curve}

In a recent series of papers, Allamigeon, Benchimol, Gaubert, and  Joswig \cite{TropCentralPath,TropSimplex} 
develop a theory of tropical linear programming.  Among other things, this enabled them to produce a counterexample to 
the continuous Hirsch conjecture regarding 
the total curvature of the central path of a linear program.

The central path of a linear program is a segment of an algebraic curve, called the central curve, which 
joins the analytic center of the feasible polytope and its optimal vertex \cite{BayerLagarias, CentralCurveDSV}.
A family of linear programs was presented in \cite{ TropCentralPath}
whose central paths have total curvature that grows exponentially. 
We compute the real tropical curve of the central curve of one member of this family, 
specifically, the linear program from \cite[\S 4]{TropCentralPath} with $r=1, \ t=4$:
\begin{align*}
\text{minimize } \ \  v_0 \  \ \text{ subject to} \ \ & -u_0 + t \geq 0, \ \ -v_0 + t^2  \geq 0, \ \ u_1 \geq 0, \ \ v_1 \geq 0,\\
& t^{1/2} (u_0+v_0) - v_1\geq 0, \ \
t u_0 - u_1 \geq 0, \ \
t v_0 - u_1 \geq 0. 
\end{align*}
The central curve of this linear program is the projection of a curve in $\R^{18}$ with 14 auxiliary variables $(x,s) = (x_1, \hdots, x_7, s_1, \hdots, s_7)$
defined by the ideal 
\begin{align} \small
I = & \langle x_1 s_1 - x_2 s_2, \ x_1 s_1 - x_3 s_3,\ x_1 s_1 - x_4 s_4, \ x_1 s_1 - x_5 s_5,\  x_1 s_1 - x_6 s_6,\ x_1 s_1 - x_7 s_7, \nonumber \\
& -u_0 + t  - x_1, \  -v_0 + t^2  - x_2, \ u_1 - x_3,\ v_1 - x_4,\ t^{1/2} (u_0+v_0) - v_1 - x_5, \  t u_0 - u_1 - x_6, \nonumber\\
& \, t v_0 - u_1 - x_7,\ s_1 - t^{1/2} s_5 - t s_6,\ s_2 - t^{1/2} s_5 - t s_7-1,\ s_3 -  s_6 - s_7, \ s_4 - s_5\rangle .  \label{eqn:central_curve}
\end{align}

The system consists of 18 variables and 17 equations.  The algebraic variety of this ideal consists of two linear $3$-spaces,
five planes, four lines, and a degree 10 curve, say $C$.  
The linear components belong to the \emph{disjoint support variety} 
described in \cite[\S 7]{CentralCurveDSV}. 

Let $\widehat{C}$ be the curve corresponding to $C$ as in Section~\ref{sec:patching}.
The total number of points contained in the intersection of $\widehat{C}$ 
with the union of the coordinate hyperplanes is 22, all of which are real.  
There is a distinct path through each of these 22 points 
with cycle number one and 
each point gives a single contribution to the multiplicity of a ray in 
the complex tropical curve.
In particular, the real and complex tropical curves are equal:

\begin{figure}
\begin{center}
\raisebox{-0.5\height}{\includegraphics[width = 2.9in, clip=true, viewport=20 200 500 600]{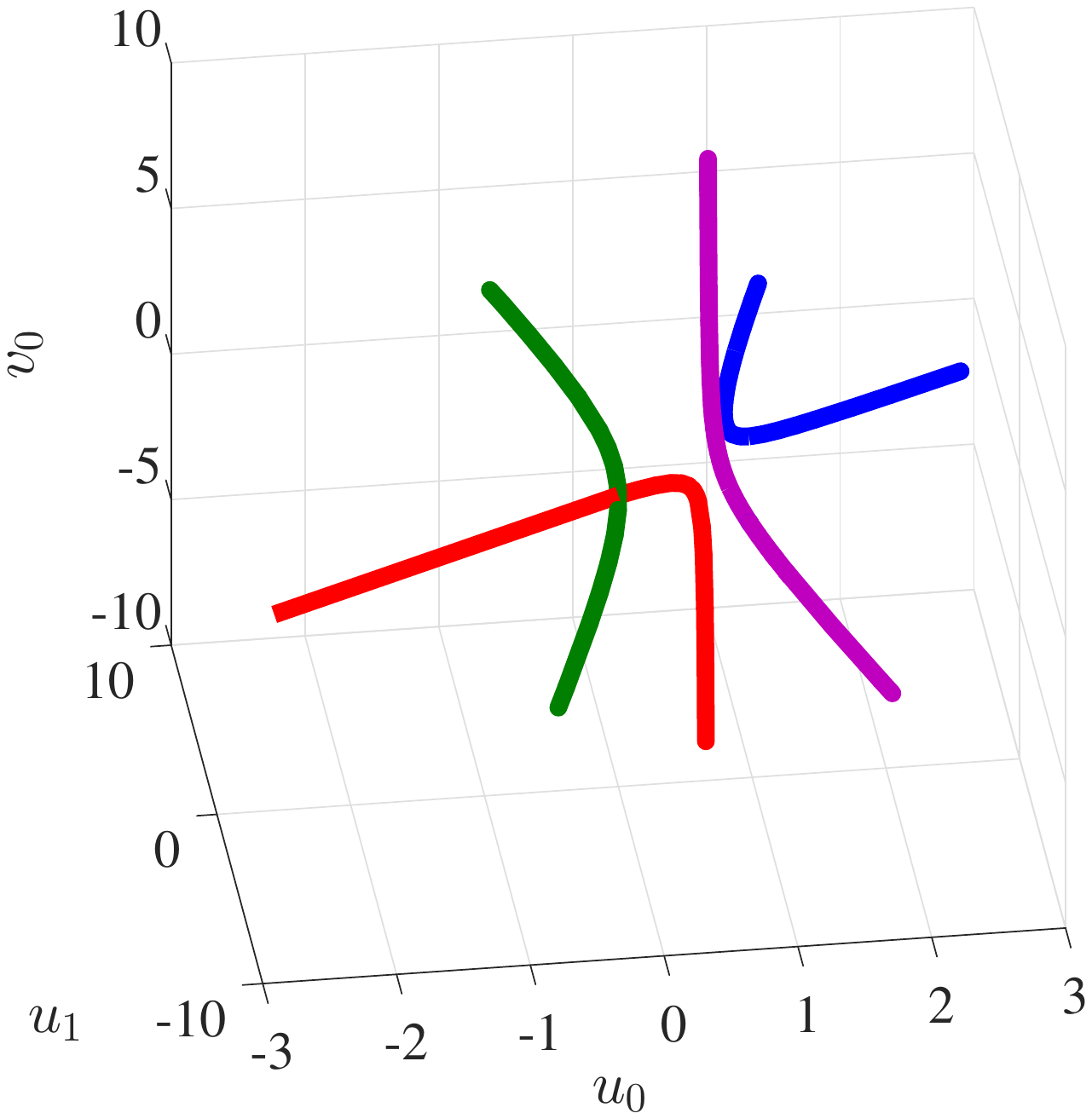}}
\raisebox{-0.5\height}{\includegraphics[width = 3in, clip=true, viewport=0 180 550 600]{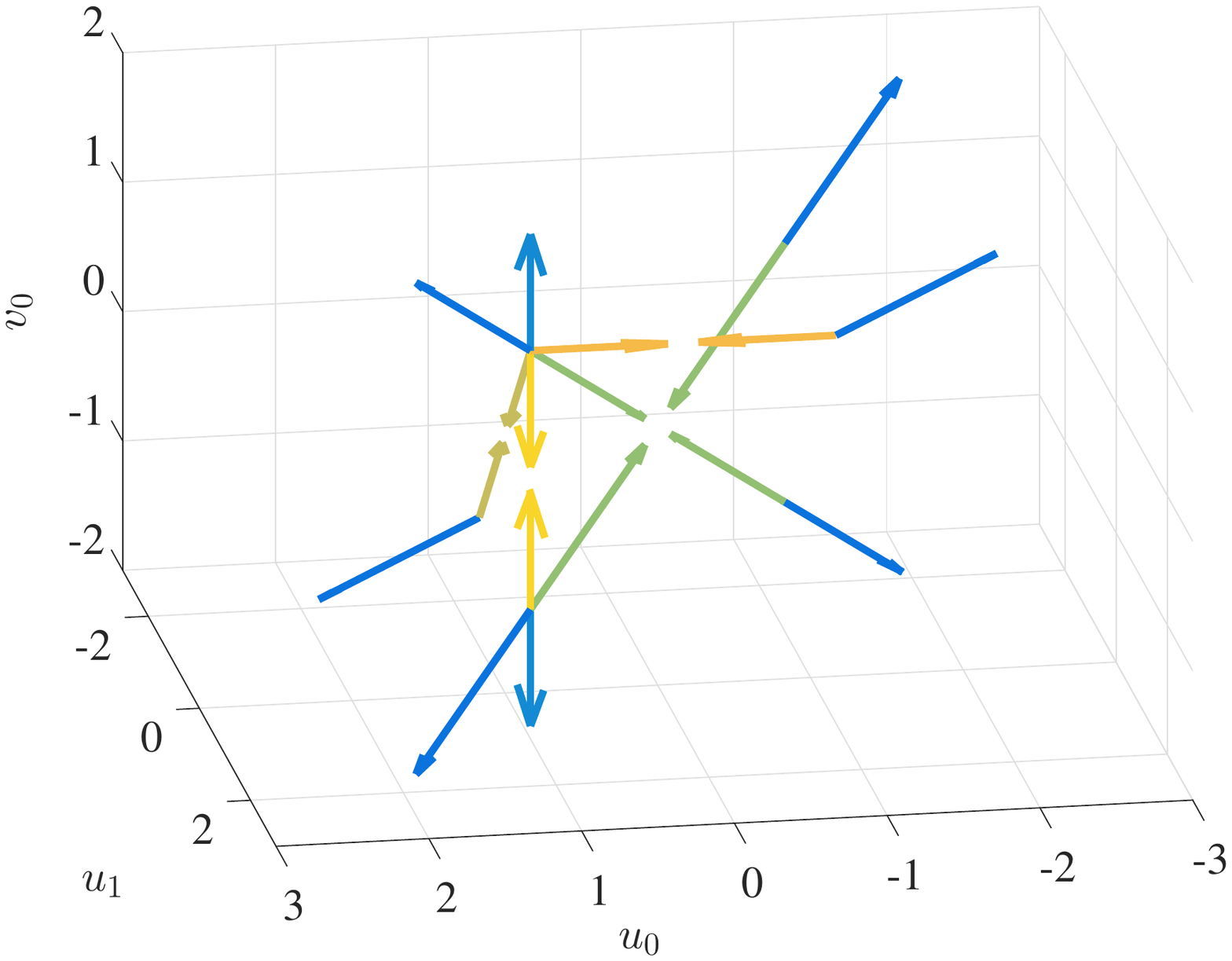}}
\caption{Left: The projection of the degree 10 central curve defined by \eqref{eqn:central_curve} in Example~\ref{ex:central_curve} onto the coordinates $(u_0, u_1, v_0)$.  
Right: Corresponding signed real tropical variety. }
\label{fig:central_curve}
\end{center}
\end{figure}

\begin{center}
\begin{tabular} {crrrrrrrrrrrrrrrrrr}
\textnormal{multiplicity} & \multicolumn{18}{c}{\textnormal{primitive element of ray in $\Trop(I)$ in $(x,s,u_0,v_0,u_1,v_1)$}}\smallskip \\
6 &$ (\hphantom{-}0,$\!\!\!&\!\!\!$  0,$\!\!\!&\!\!\!$  0,$\!\!\!&\!\!\!$  0,$\!\!\!&\!\!\!$  0,$\!\!\!&\!\!\!$  0,$\!\!\!&\!\!\!$  0,$\!\!\!&\!\!\!$  1,$\!\!\!&\!\!\!$  1,$\!\!\!&\!\!\!$  1,$\!\!\!&\!\!\!$  1,$\!\!\!&\!\!\!$  1,$\!\!\!&\!\!\!$  1,$\!\!\!&\!\!\!$  1,$\!\!\!&\!\!\!$  0,$\!\!\!&\!\!\!$  0,$\!\!\!&\!\!\!$  0,$\!\!\!&\!\!\!$  0)  $\\ 
3 &$  (\hphantom{-}1,$\!\!\!&\!\!\!$  1,$\!\!\!&\!\!\!$  1,$\!\!\!&\!\!\!$  1,$\!\!\!&\!\!\!$  1,$\!\!\!&\!\!\!$  1,$\!\!\!&\!\!\!$  1,$\!\!\!&\!\!\!$  0,$\!\!\!&\!\!\!$  0,$\!\!\!&\!\!\!$  0,$\!\!\!&\!\!\!$  0,$\!\!\!&\!\!\!$  0,$\!\!\!&\!\!\!$  0,$\!\!\!&\!\!\!$  0,$\!\!\!&\!\!\!$  1,$\!\!\!&\!\!\!$  1,$\!\!\!&\!\!\!$  1,$\!\!\!&\!\!\!$  1)  $\\ 
1 &$ (\hphantom{-}0,$\!\!\!&\!\!\!$  1,$\!\!\!&\!\!\!$  0,$\!\!\!&\!\!\!$  1,$\!\!\!&\!\!\!$  1,$\!\!\!&\!\!\!$  0,$\!\!\!&\!\!\!$  1,$\!\!\!&\!\!\!$  1,$\!\!\!&\!\!\!$  0,$\!\!\!&\!\!\!$  1,$\!\!\!&\!\!\!$  0,$\!\!\!&\!\!\!$  0,$\!\!\!&\!\!\!$  1,$\!\!\!&\!\!\!$  0,$\!\!\!&\!\!\!$  0,$\!\!\!&\!\!\!$  1,$\!\!\!&\!\!\!$  0,$\!\!\!&\!\!\!$  1) $\\ 
1 &$(-1,$\!\!\!&\!\!\!$  0,$\!\!\!&\!\!\!$  0,$\!\!\!&\!\!\!$  0,$\!\!\!&\!\!\!$  0,$\!\!\!&\!\!\!$  -1,$\!\!\!&\!\!\!$  -1,$\!\!\!&\!\!\!$  0,$\!\!\!&\!\!\!$  -1,$\!\!\!&\!\!\!$  -1,$\!\!\!&\!\!\!$  -1,$\!\!\!&\!\!\!$  -1,$\!\!\!&\!\!\!$  0,$\!\!\!&\!\!\!$  0,$\!\!\!&\!\!\!$  0,$\!\!\!&\!\!\!$  0,$\!\!\!&\!\!\!$  0,$\!\!\!&\!\!\!$  0)  $\\ 
2 &$(-1,$\!\!\!&\!\!\!$  0,$\!\!\!&\!\!\!$  0,$\!\!\!&\!\!\!$  -1,$\!\!\!&\!\!\!$  -1,$\!\!\!&\!\!\!$  0,$\!\!\!&\!\!\!$  0,$\!\!\!&\!\!\!$  0,$\!\!\!&\!\!\!$  -1,$\!\!\!&\!\!\!$  -1,$\!\!\!&\!\!\!$  0,$\!\!\!&\!\!\!$  0,$\!\!\!&\!\!\!$  -1,$\!\!\!&\!\!\!$  -1,$\!\!\!&\!\!\!$  0,$\!\!\!&\!\!\!$  0,$\!\!\!&\!\!\!$  0,$\!\!\!&\!\!\!$  -1) $\\ 
4 &$ (\hphantom{-}0,$\!\!\!&\!\!\!$  -1,$\!\!\!&\!\!\!$  0,$\!\!\!&\!\!\!$  0,$\!\!\!&\!\!\!$  0,$\!\!\!&\!\!\!$  0,$\!\!\!&\!\!\!$  0,$\!\!\!&\!\!\!$  -1,$\!\!\!&\!\!\!$  0,$\!\!\!&\!\!\!$  -1,$\!\!\!&\!\!\!$  -1,$\!\!\!&\!\!\!$  -1,$\!\!\!&\!\!\!$  -1,$\!\!\!&\!\!\!$  -1,$\!\!\!&\!\!\!$  0,$\!\!\!&\!\!\!$  0,$\!\!\!&\!\!\!$  0,$\!\!\!&\!\!\!$  0) $\\ 
2 &$ (\hphantom{-}0,$\!\!\!&\!\!\!$  0,$\!\!\!&\!\!\!$  -1,$\!\!\!&\!\!\!$  -1,$\!\!\!&\!\!\!$  -1,$\!\!\!&\!\!\!$  -1,$\!\!\!&\!\!\!$  -1,$\!\!\!&\!\!\!$  -1,$\!\!\!&\!\!\!$  -1,$\!\!\!&\!\!\!$  0,$\!\!\!&\!\!\!$  0,$\!\!\!&\!\!\!$  0,$\!\!\!&\!\!\!$  0,$\!\!\!&\!\!\!$  0,$\!\!\!&\!\!\!$  -1,$\!\!\!&\!\!\!$  -1,$\!\!\!&\!\!\!$  -1,$\!\!\!&\!\!\!$  -1) $\\ 
1 &$(\hphantom{-}0,$\!\!\!&\!\!\!$  0,$\!\!\!&\!\!\!$  -1,$\!\!\!&\!\!\!$  0,$\!\!\!&\!\!\!$  0,$\!\!\!&\!\!\!$  0,$\!\!\!&\!\!\!$  -1,$\!\!\!&\!\!\!$  -1,$\!\!\!&\!\!\!$  -1,$\!\!\!&\!\!\!$  0,$\!\!\!&\!\!\!$  -1,$\!\!\!&\!\!\!$  -1,$\!\!\!&\!\!\!$  -1,$\!\!\!&\!\!\!$  0,$\!\!\!&\!\!\!$  0,$\!\!\!&\!\!\!$  -1,$\!\!\!&\!\!\!$  -1,$\!\!\!&\!\!\!$  0)  $\\ 
1 &$ (\hphantom{-}0,$\!\!\!&\!\!\!$  0,$\!\!\!&\!\!\!$  0,$\!\!\!&\!\!\!$  0,$\!\!\!&\!\!\!$  0,$\!\!\!&\!\!\!$  0,$\!\!\!&\!\!\!$  0,$\!\!\!&\!\!\!$  0,$\!\!\!&\!\!\!$  0,$\!\!\!&\!\!\!$  0,$\!\!\!&\!\!\!$  0,$\!\!\!&\!\!\!$  0,$\!\!\!&\!\!\!$  0,$\!\!\!&\!\!\!$  0,$\!\!\!&\!\!\!$  -1,$\!\!\!&\!\!\!$  0,$\!\!\!&\!\!\!$  0,$\!\!\!&\!\!\!$  0)$\\ 
1 &$(\hphantom{-}0,$\!\!\!&\!\!\!$  0,$\!\!\!&\!\!\!$  0,$\!\!\!&\!\!\!$  0,$\!\!\!&\!\!\!$  0,$\!\!\!&\!\!\!$  0,$\!\!\!&\!\!\!$  0,$\!\!\!&\!\!\!$  0,$\!\!\!&\!\!\!$  0,$\!\!\!&\!\!\!$  0,$\!\!\!&\!\!\!$  0,$\!\!\!&\!\!\!$  0,$\!\!\!&\!\!\!$  0,$\!\!\!&\!\!\!$  0,$\!\!\!&\!\!\!$  0,$\!\!\!&\!\!\!$  -1,$\!\!\!&\!\!\!$  0,$\!\!\!&\!\!\!$  0)$\\ \end{tabular}
\end{center}

Figure~\ref{fig:central_curve} shows a projection of the real points in $C$
and the corresponding signed real tropical curve onto coordinates $(u_0, u_1, v_0)$.  The colors of the real curve indicate the image of connected components.  There is an intersection between the green and red segments, but the magenta and blue segments do not intersect.

Generally, the signed real tropical variety of the central curve of a linear program should be closely related to the oriented matroid associated 
with the input data \cite{bachem1992linear}. 

\section{Conclusion}

This paper and its predecessors \cite{HauensteinSottile12, jensen2014computing}
demonstrate the potential for using the tools of numerical algebraic geometry to compute tropical varieties.  The recent expansion of numerical tools for real varieties  
makes numerical algebraic geometric methods 
particular effective for computing real tropical varieties.  
The next natural step for the development of these techniques is to compute tropicalizations of real and complex surfaces.  

One important motivation for computing tropical surfaces is that it
would enable the computation of tropical curves defined by 
polynomials with non-constant coefficients, which are of interest for many of the applications mentioned earlier, e.g., \cite{TropCentralPath, ViroPatchworking}. 
Real tropical surfaces also provide significantly more subtleties than curves. 
For example, the real tropical variety of a surface may not be pure-dimensional 
and may not be a sub-fan of the complex tropical variety.  
With recent developments related to numerically decomposing real surfaces in any 
dimension, e.g., \cite{BertiniReal}, we are optimistic that a
few new ideas building on the ability to compute 
real tropical curves will be enough to compute real tropical surfaces. 

\section*{Acknowledgments}\label{acknowledge}

DAB and JDH were partially 
supported by Sloan Research Fellowship and NSF ACI-1460032.
CV~was partially supported by NSF grant DMS-1204447 and the FRPD program at North Carolina State University.
We would like to thank Anton Leykin for helpful comments.

\bibliographystyle{plain}

\end{document}